\theoremstyle{definition}
\newtheorem{definition}{Definition}[section]
\theoremstyle{plain}
\newtheorem{theorem}[definition]{Theorem}
\newtheorem{proposition}[definition]{Proposition}
\newtheorem{lemma}[definition]{Lemma}
\theoremstyle{remark}
\newtheorem{remark}[definition]{Remark}
\xpatchcmd{\@sec@pppage}{
\bfseries}{
\normalfont\scshape\Large}{}{}
\numberwithin{equation}{section}
\newcommand{\PP}{\mathscr{P}}
\newcommand{\Rd}{\mathds{R}^{d}}
\newcommand{\lc}{\preceq_{\textnormal{c}}}
\newcommand{\R}{\mathds{R}}
\newcommand{\MCov}{\textnormal{MCov}}
\newcommand{\MT}{\mathsf{MT}}
\newcommand{\Cpl}{\mathsf{Cpl}}
\newcommand{\bary}{\operatorname{bary}}
\newcommand{\E}{\mathds{E}}
\newcommand{\Law}{\operatorname{Law}}
\newcommand{\dom}{\operatorname{dom}}
\newcommand{\Cqs}{C_{2}(\mathds{R}^{d})}
\newcommand{\Cbqs}{C_{\textnormal{b},2}(\mathds{R}^{d})}
\newcommand{\conv}{\operatorname{conv}}
\begin{document}

%%%%%%%%%%%%%%%%%%%%%%%%%%%%%%%%%%%%%%%%%%%%%%%%%%%%%%%%%%%%%%%%%%%%%%%%%%%%%%%%%%%%%%%%%%%%%%%%%%%%

\title{
\LARGE \texorpdfstring{$q$}{q}-Bass martingales\thanks{
I am grateful to Julio Backhoff-Veraguas, Mathias Beiglb\"ock and Walter Schachermayer for their useful comments. This research was funded by the Austrian Science Fund (FWF) [Grant DOI: 10.55776/P35519]. For open access purposes, the author has applied a CC BY public copyright license to any author accepted manuscript version arising from this submission.
}}

%%%%%%%%%%%%%%%%%%%%%%%%%%%%%%%%%%%%%%%%%%%%%%%%%%%%%%%%%%%%%%%%%%%%%%%%%%%%%%%%%%%%%%%%%%%%%%%%%%%%

\author{
\large Bertram Tschiderer\thanks{
Faculty of Mathematics, University of Vienna (email: \href{mailto: bertram.tschiderer@univie.ac.at}{bertram.tschiderer@univie.ac.at}).
} 
}

%%%%%%%%%%%%%%%%%%%%%%%%%%%%%%%%%%%%%%%%%%%%%%%%%%%%%%%%%%%%%%%%%%%%%%%%%%%%%%%%%%%%%%%%%%%%%%%%%%%%

\date{}
%\date{\normalsize\today}

%%%%%%%%%%%%%%%%%%%%%%%%%%%%%%%%%%%%%%%%%%%%%%%%%%%%%%%%%%%%%%%%%%%%%%%%%%%%%%%%%%%%%%%%%%%%%%%%%%%%

\maketitle

%%%%%%%%%%%%%%%%%%%%%%%%%%%%%%%%%%%%%%%%%%%%%%%%%%%%%%%%%%%%%%%%%%%%%%%%%%%%%%%%%%%%%%%%%%%%%%%%%%%%

\begin{abstract} \small \noindent \textsc{Abstract.} An intriguing question in martingale optimal transport is to characterize the martingale with prescribed initial and terminal marginals whose transition kernel is as Gaussian as possible. In this work we address an extension of this question, in which the role of the Gaussian distribution is replaced by an arbitrary reference measure $q$. Our first main result is a dual formulation of the corresponding martingale optimization problem in terms of convex functions.

\smallskip

In the well-studied case when $q$ is Gaussian, the careful analysis of the solution to the above-mentioned optimization problem is a crucial building block in the construction of Bass martingales, i.e., Brownian martingales induced by gradients of convex functions with possibly non-degenerate starting laws. In our second main result we extend this concept beyond the Gaussian case by introducing the notion of $q$-Bass martingales in discrete time, and give sufficient conditions for their existence.

\bigskip

\small \noindent \href{https://mathscinet.ams.org/mathscinet/msc/msc2020.html}{\textit{MSC 2020 subject classifications:}} Primary 60G42, 60G44; secondary 91G20.

\bigskip

\small \noindent \textit{Keywords and phrases:} optimal transport, Brenier's theorem, Benamou--Brenier, stretched Brownian motion, Bass martingale
\end{abstract}

%%%%%%%%%%%%%%%%%%%%%%%%%%%%%%%%%%%%%%%%%%%%%%%%%%%%%%%%%%%%%%%%%%%%%%%%%%%%%%%%%%%%%%%%%%%%%%%%%%%%
\section{Introduction}
%%%%%%%%%%%%%%%%%%%%%%%%%%%%%%%%%%%%%%%%%%%%%%%%%%%%%%%%%%%%%%%%%%%%%%%%%%%%%%%%%%%%%%%%%%%%%%%%%%%%

%%%%%%%%%%%%%%%%%%%%%%%%%%%%%%%%%%%%%%%%%%%%%%%%%%%%%%%%%%%%%%%%%%%%%%%%%%%%%%%%%%%%%%%%%%%%%%%%%%%%
\subsection{Martingale optimization problem}
%%%%%%%%%%%%%%%%%%%%%%%%%%%%%%%%%%%%%%%%%%%%%%%%%%%%%%%%%%%%%%%%%%%%%%%%%%%%%%%%%%%%%%%%%%%%%%%%%%%%

Let $\mu, \nu$ be elements of $\PP_{2}(\Rd)$, the collection of probability measures on $\Rd$ with finite second moments. Assume that $\mu$ is dominated by $\nu$ in convex order, denoted by $\mu \lc \nu$, and meaning that $\int f \, d\mu \leqslant \int f\, d\nu$ holds for all convex functions $f \colon \Rd \rightarrow \R$. In the recent articles \cite{BVBHK20, CHL21, BVBST23, BVST23, AMP23}, the martingale optimization problem
\begin{equation} \label{eq_primal_qq_gamma} 
P^{\gamma}(\mu,\nu) \coloneqq \sup_{\pi \in \MT(\mu,\nu)}  \int \MCov(\pi_{x},\gamma) \, \mu(dx)
\end{equation}
received particular interest. The standard Gaussian measure $\gamma$ on $\Rd$ is used as reference measure in \eqref{eq_primal_qq_gamma} and the cost function is the maximal covariance defined as 
\begin{equation} \label{max_cov_def_qq_gamma} 
\MCov(\rho,\varrho) \coloneqq \sup_{\tilde{\pi} \in \Cpl(\rho,\varrho)} \int \langle y,z \rangle \, \tilde{\pi}(dy,dz), 
\end{equation}
for $\rho,\varrho \in \PP_{2}(\Rd)$. The maximization in \eqref{max_cov_def_qq_gamma} takes place over the set $\Cpl(\rho,\varrho)$ of all couplings $\tilde{\pi}$ between $\rho$ and $\varrho$, i.e., probability measures $\tilde{\pi}$ on $\Rd \times \Rd$ with first marginal $\rho$ and second marginal $\varrho$. In the martingale optimization problem \eqref{eq_primal_qq_gamma}, the supremum is taken over the collection $\MT(\mu,\nu)$ of martingale transports, i.e., couplings $\pi \in \Cpl(\mu,\nu)$ satisfying $\bary(\pi_{x}) \coloneqq \int y \, \pi_{x}(dy) = x$, for $\mu$-a.e.\ $x \in \Rd$. Here, the family of probability measures $\{\pi_{x}\}_{x\in\Rd}$ on $\Rd$ is obtained by disintegrating the coupling $\pi$ with respect to its first marginal $\mu$, i.e., $\pi(dx,dy) = \pi_{x}(dy) \, \mu(dx)$.

\smallskip

The optimization problem \eqref{eq_primal_qq_gamma} is a weak optimal transport problem in the sense of \cite{GRST17}, since the function $\pi \mapsto \int \MCov(\pi_{x},\gamma) \, \mu(dx)$ is non-linear, as opposed to classical optimal transport, where linear problems of the form $\pi \mapsto \int c(x,y) \, \pi(dx,dy)$ are studied. The problem \eqref{eq_primal_qq_gamma} can also be viewed as a discrete-time version of a continuous-time martingale optimization problem, see \cite{BVBHK20, BVBST23, BVST23}. Related problems also appear in \cite{Loe18, Loe23} and \cite{HT19}.

\medskip

It is natural to consider more general reference measures than the standard Gaussian $\gamma$ in \eqref{eq_primal_qq_gamma}. We thus fix a measure $q \in \PP_{2}(\Rd)$ which does not give mass to small sets, i.e., measurable sets with Hausdorff dimension at most $d-1$ in $\Rd$. This leads to the following extension of \eqref{eq_primal_qq_gamma}:
\begin{equation} \label{eq_primal_qq} 
P^{q}(\mu,\nu) 
\coloneqq  \sup_{\pi \in \MT(\mu,\nu)}  \int \MCov(\pi_{x},q) \, \mu(dx).
\end{equation}

%%%%%%%%%%%%%%%%%%%%%%%%%%%%%%%%%%%%%%%%%%%%%%%%%%%%%%%%%%%%%%%%%%%%%%%%%%%%%%%%%%%%%%%%%%%%%%%%%%%%
\subsection{Dual formulation}
%%%%%%%%%%%%%%%%%%%%%%%%%%%%%%%%%%%%%%%%%%%%%%%%%%%%%%%%%%%%%%%%%%%%%%%%%%%%%%%%%%%%%%%%%%%%%%%%%%%%

Our first main result is a dual formulation of the martingale optimization problem \eqref{eq_primal_qq} in terms of convex functions.

\begin{theorem} \label{theorem_new_duality_qq}
Let $\mu, \nu, q \in \PP_{2}(\Rd)$. Assume that $\mu \lc \nu$ and that $q$ does not give mass to small sets. The primal problem \eqref{eq_primal_qq} is uniquely attained. Its value $P^{q}(\mu,\nu)$ is finite and equal to
\begin{equation} \label{WeakDual_qq}
D^{q}(\mu,\nu) \coloneqq \inf_{\substack{\psi \in L^{1}(\nu), \\ \textnormal{$\psi$ convex}}} \Big( \int \psi \, d\nu - \int (\psi^{\ast} \star q)^{\ast} \, d \mu \Big).
\end{equation}
\end{theorem}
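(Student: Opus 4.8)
The plan is to proceed via weak optimal transport duality and then reduce the dual problem to an infimum over convex functions, exploiting the structure of the maximal-covariance cost $\MCov(\cdot,q)$.

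First I would rewrite the cost functional in a more tractable form. The key observation is that, by Brenier's theorem (applicable because $q$ does not charge small sets), the maximal covariance $\MCov(\rho,q)$ for $\rho\in\PP_2(\R^d)$ admits a dual description: $\MCov(\rho,q)=\int \varphi\,d\rho+\int\varphi^*\,dq$ minimized over convex $\varphi$, or equivalently there is a convex potential whose gradient pushes $\rho$ forward to... wait, it is $q$ that is pushed to $\rho$. Concretely, $\MCov(\rho,q)=\inf_{\varphi}\big(\int\varphi^*\,d\rho+\int\varphi\,dq\big)$ over convex $\varphi$. Plugging this into $P^q$, one sees that $\int\MCov(\pi_x,q)\,\mu(dx)$ is itself an infimum, so $P^q(\mu,\nu)$ is a sup–inf. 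The goal of the dual formula is precisely to interchange these.

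Next, I would invoke the general weak duality theorem for barycentric/weak transport costs. The functional $\rho\mapsto\MCov(\rho,q)$ is concave and upper semicontinuous in $\rho$ (it is an infimum of affine functionals in the dual representation, hence concave; semicontinuity on $\PP_2$ follows from the coupling definition), and it has at most quadratic growth. Since we optimize over martingale couplings $\MT(\mu,\nu)$, the relevant duality — as developed in the weak-transport literature and in the Gaussian case of the cited papers — pairs the primal with an infimum over pairs of functions $(\psi,\text{something on }\mu)$, where the martingale constraint forces the test function on $\mu$'s side to be the concave envelope / $c$-transform of the one on $\nu$'s side. For the cost $\MCov(\cdot,q)$, the $c$-transform of a convex function $\psi$ works out to be $(\psi^*\star q)^*$, where $\star$ denotes the sup-convolution (or Moreau-type envelope) induced by $q$; this is the content of \eqref{WeakDual_qq}. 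I would verify this identity by computing: for a convex $\psi$, the function $x\mapsto\sup_{\rho:\,\bary(\rho)=x}\big(\MCov(\rho,q)-\int\psi\,d\rho\big)$ equals $(\psi^*\star q)^*(x)$, using the dual representation of $\MCov$ and swapping suprema.

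The main obstacle, as usual in these weak-transport dualities, is the absence of a priori compactness and the passage from weak to strong duality: establishing that there is \emph{no duality gap} and that the primal supremum is \emph{attained}. For attainment I would use tightness of $\MT(\mu,\nu)$ together with upper semicontinuity of $\pi\mapsto\int\MCov(\pi_x,q)\,\mu(dx)$; the delicate point is that $\MCov(\pi_x,q)$ depends on the disintegration, so one needs semicontinuity of the map $\pi\mapsto(\pi_x)_{x}$ in a suitable sense, handled via the joint lower semicontinuity of relative-entropy-type functionals or via a direct coupling argument. For the no-gap part, the standard route is a minimax argument (Sion's theorem applied on the sup–inf obtained after substituting the dual form of $\MCov$), which requires convexity/concavity in the right variables and compactness in one of them — obtainable after restricting to a tight set of couplings and to convex functions normalized appropriately (e.g. fixing the value at a base point, or working modulo constants since both $\int\psi\,d\nu$ and $\int(\psi^*\star q)^*\,d\mu$ shift consistently). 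Finally, uniqueness of the primal optimizer follows from strict concavity of $\rho\mapsto\MCov(\rho,q)$ along the fibers — which in turn comes from the fact that $q$ charges no small set, so the Brenier map from $q$ is unique — combined with the linearity of the marginal constraints; two optimizers would have an optimal average, and strictness forces them to coincide $\mu$-fiberwise, hence as couplings.
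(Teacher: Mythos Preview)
Your overall architecture matches the paper's: invoke weak-transport duality to obtain $P^q=\tilde D^q$, where $\tilde D^q$ is an infimum over test functions $\psi$ of $\int\psi\,d\nu-\int\varphi^\psi\,d\mu$ with $\varphi^\psi(x)=\inf_{p\in\PP_2^x(\Rd)}\big(\int\psi\,dp-\MCov(p,q)\big)$; then identify $\varphi^\psi$ with $(\psi^*\star q)^*$; finally, uniqueness via strict concavity of $\rho\mapsto\MCov(\rho,q)$, which comes from Brenier's theorem since $q$ charges no small sets. That is exactly the paper's route (the paper cites an existing weak-OT duality theorem rather than running Sion directly, but this is a matter of packaging).

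Two concrete gaps, however. First, a terminological one that could derail the computation: the operation $\star$ is \emph{not} a sup-convolution or Moreau envelope; it is the plain integral $(\psi^*\star q)(y)=\int\psi^*(y+z)\,q(dz)$, coinciding with ordinary convolution when $q$ is symmetric. The identity $\varphi^\psi=(\psi^*\star q)^*$ is obtained in the paper by computing $(\varphi^\psi)^*(y)=\sup_{p\in\PP_2(\Rd)}\big(\MCov(p,q)-\int\psi_y\,dp\big)$ with $\psi_y(z)=\psi(z)-\langle y,z\rangle$, and then showing this unconstrained supremum equals $\int\psi^*(y+z)\,q(dz)$; your ``swapping suprema'' sketch is in the right spirit but needs this explicit step, and the biconjugate $(\varphi^\psi)^{**}=\varphi^\psi$ must be justified.

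Second, and more substantively: the weak-OT duality you invoke produces an infimum over \emph{general} continuous test functions $\psi$ (with quadratic growth), not over convex ones. The identification $\varphi^\psi=(\psi^*\star q)^*$ is only valid for convex $\psi$; for non-convex $\psi$ the Fenchel--Moreau step fails. The paper handles this via a separate proposition showing that $\tilde{\mathcal D}(\conv\psi)\leqslant\tilde{\mathcal D}(\psi)$ for every $\psi$, so the dual infimum may be restricted to convex functions without loss. That inequality rests on the monotonicity of $p\mapsto\MCov(p,q)$ with respect to convex order (if $p\lc\bar p$ then $\MCov(p,q)\leqslant\MCov(\bar p,q)$, by Kantorovich duality), combined with the representation $(\conv\psi)(y)=\inf_{p\in\PP_2^y(\Rd)}\int\psi\,dp$. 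Your proposal announces the reduction to convex $\psi$ in the opening line but never supplies a mechanism for it; this is the missing idea.
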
 

A few comments on the notation used in the dual problem \eqref{WeakDual_qq} are in order. For a function $\psi \colon \Rd \rightarrow (-\infty,+\infty]$, its convex conjugate is denoted by $\psi^{\ast}$ and given by 
\begin{equation} \label{eq_def_conv_conj_qq}
\psi^{\ast}(x) \coloneqq \sup_{y \in \Rd} \big(\langle x,y \rangle - \psi(y)\big).
\end{equation}
For a probability measure $\varrho$ on $\Rd$, we denote by $\psi \star \varrho$ the function 
\begin{equation} \label{eq_def_star_qq}
(\psi \star \varrho)(y) \coloneqq \int \psi(y+z) \, \varrho(dz)
\end{equation}
provided $\psi(y + \, \cdot \,) \in L^{1}(\varrho)$. If $\varrho$ is symmetric (like, e.g., in the Gaussian case $\varrho = \gamma$), the function $\psi \star \varrho$ coincides with the convolution of $f$ and $\varrho$ given by
\[
(\psi \ast \varrho)(y) = \int \psi(y-z) \, \rho(dz).
\]

\smallskip

In the Gaussian case $q = \gamma$, the duality result of Theorem \ref{theorem_new_duality_qq} is contained in the statement of Theorem 1.4 in \cite{BVBST23}. It turns out that for the proof of this duality result it is not essential that the reference measure $q$ is Gaussian. In fact, it is sufficient that $q$ is a probability measure on $\Rd$ with finite second moments (so that the maximal covariance with respect to $q$ is well-defined) and which does not give mass to small sets (recall that such sets are measurable subsets of $\Rd$ with Hausdorff dimension at most $d-1$). The latter condition enables us to apply Brenier's theorem (see, e.g., \cite[Theorem 2.12]{Vil03}).

%%%%%%%%%%%%%%%%%%%%%%%%%%%%%%%%%%%%%%%%%%%%%%%%%%%%%%%%%%%%%%%%%%%%%%%%%%%%%%%%%%%%%%%%%%%%%%%%%%%%
\subsection{\texorpdfstring{$q$}{q}-Bass martingales} 
%%%%%%%%%%%%%%%%%%%%%%%%%%%%%%%%%%%%%%%%%%%%%%%%%%%%%%%%%%%%%%%%%%%%%%%%%%%%%%%%%%%%%%%%%%%%%%%%%%%%

We recall the following special construction of Brownian martingales (see \cite[p.\ 182]{KS98} for the classical definition) induced by gradients of convex functions with possibly non-degenerate starting laws.

\begin{definition} \label{def:BassMarti_intro.2_qq} Let $(B_{t})_{0 \leqslant t \leqslant 1}$ be $d$-dimensional Brownian motion with initial distribution $B_{0} \sim \hat{\alpha}$, where $\hat{\alpha}$ is an arbitrary probability measure on $\Rd$. Let $\hat{v} \colon \Rd \rightarrow \R$ be a convex function such that $\nabla \hat{v}(B_{1})$ is square-integrable. The martingale 
\begin{equation} \label{def:BassMarti_intro.2_condexp_qq}
\hat{M}_{t} \coloneqq 
\E[\nabla \hat{v}(B_{1}) \, \vert \, \sigma(B_{s} \colon s \leqslant t)]
= \E[\nabla \hat{v}(B_{1}) \, \vert \, B_{t}], \qquad 0 \leqslant t \leqslant 1
\end{equation}
is called \textit{Bass martingale} with \textit{Bass measure} $\hat{\alpha}$, and with initial marginal $\mu \coloneqq \Law(\hat{M}_{0})$ and terminal marginal $\nu \coloneqq \Law(\hat{M}_{1})$.
\end{definition}

Martingales of this form where introduced by Bass \cite{Bas83} (in dimension $d=1$ and with $\hat{\alpha}$ a Dirac measure) in order to derive a solution of the Skorokhod embedding problem. Definition \ref{def:BassMarti_intro.2_qq} generalizes this concept to multiple dimensions and non-degenerate starting laws; see \cite{BVBHK20, BVBST23}. 

\smallskip

We write $\gamma^{t}$ for the $d$-dimensional centered Gaussian distribution with covariance matrix $tI_{d}$ and set $\hat{v}_{t} \coloneqq \hat{v} \ast \gamma^{1-t} \colon \Rd \rightarrow \R$, for $0 \leqslant t \leqslant 1$. In these terms, \eqref{def:BassMarti_intro.2_condexp_qq} amounts to
\begin{equation} \label{def:BassMarti_intro.2_condexp_qq_ii}
\hat{M}_{t} = \nabla \hat{v}_{t}(B_{t}), \qquad 0 \leqslant t \leqslant 1.
\end{equation}
Furthermore, denoting by $\ast$ the convolution operator (either between a function and a measure or between two measures), we observe that the convex function $\hat{v}$ and the Bass measure $\hat{\alpha}$ from Definition \ref{def:BassMarti_intro.2_qq} satisfy the identities 
\begin{equation} \label{eq_def_id_bm.2_qq}
(\nabla \hat{v} \ast \gamma)(\hat{\alpha}) = \mu
\qquad \textnormal{ and } \qquad 
\nabla \hat{v}(\hat{\alpha} \ast \gamma) = \nu.
\end{equation}

\smallskip

Motivated by \eqref{eq_def_id_bm.2_qq}, it is natural to extend the definition of Bass martingales as follows.

\begin{definition} \label{def_qbm_dt} Let $\mu, \nu, q \in \PP_{2}(\Rd)$ and assume that $q$ does not give mass to small sets. A pair $(\hat{v},\hat{\alpha})$ consisting of a convex function $\hat{v} \colon \Rd \rightarrow \R$ and a probability measure $\hat{\alpha}$ on $\Rd$ is called a \textit{$q$-Bass martingale} with initial marginal $\mu$ and terminal marginal $\nu$ if
\begin{equation} \label{def_q_bm_gen}
(\nabla \hat{v} \star q)(\hat{\alpha}) = \mu
\qquad \textnormal{ and } \qquad 
\nabla \hat{v}(\hat{\alpha} \ast q) = \nu.
\end{equation}
\end{definition}

Recall that the symbol $\star$ is defined as in \eqref{eq_def_star_qq}. We summarize the fundamental relations \eqref{def_q_bm_gen} in the following graphic.
\[
\begin{tikzcd}
\hat{\alpha} \ast q  \arrow[r, "\nabla \hat{v}"] & \nu  \\
\hat{\alpha} \arrow[u, "\ast"] \arrow[r, "\nabla \hat{v} \, \star \, q"] & \mu 
\end{tikzcd}
\]
We note that $q$-Bass martingales in the sense of Definition \ref{def_qbm_dt} are discrete-time stochastic processes. In the Gaussian case $q = \gamma$, the corresponding continuous-time martingales can be represented as in \eqref{def:BassMarti_intro.2_condexp_qq}, \eqref{def:BassMarti_intro.2_condexp_qq_ii}. We leave the definition of general $q$-Bass martingales in continuous time as an open question.

\smallskip

Next, we define an appropriate class of convex functions $\hat{v} \colon \Rd \rightarrow \R$ which generate $q$-Bass martingales in a natural way.

\begin{definition} \label{def_psi_v_qbg} Let $\mu, q \in \PP_{2}(\Rd)$ and assume that $q$ does not give mass to small sets. Let $\hat{v} \colon \Rd \rightarrow \R$ be a convex function and denote by $\hat{\psi} \coloneqq \hat{v}^{\ast}$ its convex conjugate. We say that $\hat{v}$ is \textit{$q$-Bass martingale generating} with initial marginal $\mu$ if
\begin{enumerate}[label=(\roman*)] 
\item \label{def_psi_v_qbg_i} $\mu(\operatorname{int}(\dom\hat{\psi})) = 1$,
\item \label{def_psi_v_qbg_ii} the function $\hat{v} \star q$ is finite-valued, strictly convex, and $\nabla(\hat{v} \star q) = (\nabla \hat{v}) \star q$,
\item \label{def_psi_v_qbg_iii} the measure $\nu^{\hat{v}} \coloneqq \nabla \hat{v}(\hat{\alpha}^{\hat{v}} \ast q)$ has finite second moment, where $\hat{\alpha}^{\hat{v}} \coloneqq \nabla(\hat{v} \star q)^{\ast}(\mu)$.
\end{enumerate}
\end{definition}

In our second main result we show that $q$-Bass martingale generating functions indeed induce $q$-Bass martingales in the natural way. Beyond that, we establish the connection to the primal problem \eqref{eq_primal_qq} as well as the dual problem \eqref{WeakDual_qq}.

\begin{theorem} \label{qq_smr_qq} Let $\mu, q \in \PP_{2}(\Rd)$ and assume that $q$ does not give mass to small sets. Let $\hat{v}$ be $q$-Bass martingale generating with initial marginal $\mu$ and let $\nu^{\hat{v}}, \hat{\alpha}^{\hat{v}}$ be as in Definition \ref{def_psi_v_qbg}.
\begin{enumerate}[label=(\arabic*)] 
\item \label{qq_smr_qq_i} The pair $(\hat{v},\hat{\alpha}^{\hat{v}})$ is a $q$-Bass martingale with initial marginal $\mu$ and with terminal marginal $\nu^{\hat{v}}$.
\item \label{qq_smr_qq_ii} The optimizer $\hat{\pi}(dx,dy) = \hat{\pi}_{x}(dy) \, \mu(dx)$ of the primal problem \eqref{eq_primal_qq} between $\mu$ and $\nu = \nu^{\hat{v}}$ is given by
\[
\hat{\pi}_{x} = \nabla \hat{v}\Big(\nabla(\hat{v} \star q)^{\ast}(x) + \, \cdot \, \Big)(q),
\]
for $\mu$-a.e.\ $x \in \Rd$.
\item \label{qq_smr_qq_iii} The function $\hat{\psi} = \hat{v}^{\ast}$ is an optimizer of the dual problem \eqref{WeakDual_qq} between $\mu$ and $\nu = \nu^{\hat{v}}$.
\end{enumerate}
\end{theorem}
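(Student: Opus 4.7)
The plan is to verify \ref{qq_smr_qq_i} directly, then define the candidate primal optimizer and establish \ref{qq_smr_qq_ii} and \ref{qq_smr_qq_iii} simultaneously by matching the primal cost at $\hat{\pi}$ with the dual cost at $\hat{\psi}$ and invoking the strong duality provided by Theorem \ref{theorem_new_duality_qq}. The running technical ingredient, granted by \ref{def_psi_v_qbg_ii} in Definition \ref{def_psi_v_qbg}, is that $\hat{v} \star q$ is finite-valued and strictly convex with $\nabla(\hat{v} \star q) = (\nabla \hat{v}) \star q$, so its Legendre conjugate $(\hat{v} \star q)^{\ast}$ is differentiable on the interior of its domain and the two gradients are inverse to one another there. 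Combined with the elementary bound $(\hat{v} \star q)^{\ast}(x) \leqslant \hat{\psi}(x) - \langle x, \bary(q) \rangle$ and condition \ref{def_psi_v_qbg_i}, this places $\mu$-almost every $x$ in the set on which $\nabla (\hat{v} \star q)^{\ast}$ is defined.

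For \ref{qq_smr_qq_i}, the identity $\nabla \hat{v}(\hat{\alpha}^{\hat{v}} \ast q) = \nu^{\hat{v}}$ is immediate from the definition of $\nu^{\hat{v}}$, while the identity $(\nabla \hat{v} \star q)(\hat{\alpha}^{\hat{v}}) = \mu$ follows from
\[
(\nabla \hat{v} \star q)(\hat{\alpha}^{\hat{v}}) = \nabla(\hat{v} \star q)\big(\nabla(\hat{v} \star q)^{\ast}(\mu)\big) = \mu
\]
by the inversion above. For \ref{qq_smr_qq_ii}, I would define $\hat{\pi}(dx,dy) \coloneqq \hat{\pi}_{x}(dy)\,\mu(dx)$ with $\hat{\pi}_{x}$ as in the statement, read off the first marginal $\mu$ by construction, check that the second marginal equals $\nu^{\hat{v}}$ by the change of variables $a = \nabla(\hat{v} \star q)^{\ast}(x)$ (using \ref{qq_smr_qq_i}), and verify the martingale property $\bary(\hat{\pi}_{x}) = (\nabla \hat{v} \star q)(\nabla(\hat{v} \star q)^{\ast}(x)) = x$.

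The heart of the argument is to show that the primal and dual objective values at $\hat{\pi}$ and $\hat{\psi}$ coincide. For each fixed $a \in \Rd$, the map $z \mapsto \nabla \hat{v}(a+z)$ is the gradient (in $z$) of the convex function $z \mapsto \hat{v}(a+z)$, so Brenier's theorem identifies this map as the optimal transport from $q$ to $\hat{\pi}_{x}$ (with $a = \nabla(\hat{v} \star q)^{\ast}(x)$) and yields
\[
\MCov(\hat{\pi}_{x}, q) = \int \langle z, \nabla \hat{v}(a+z) \rangle \, q(dz).
\]
On the dual side, since $\hat{v}$ is finite-valued convex on $\Rd$ and therefore continuous, $\hat{\psi}^{\ast} = \hat{v}$ and $(\hat{\psi}^{\ast} \star q)^{\ast} = (\hat{v} \star q)^{\ast}$. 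Writing $\int \hat{\psi} \, d\nu^{\hat{v}} = \int \hat{\psi}(\nabla \hat{v}(a+z)) \, q(dz)\,\hat{\alpha}^{\hat{v}}(da)$, applying the pointwise Fenchel equality $\hat{\psi}(\nabla \hat{v}(w)) = \langle w, \nabla \hat{v}(w) \rangle - \hat{v}(w)$, splitting $\langle a+z, \nabla \hat{v}(a+z) \rangle$ into its $a$- and $z$-components, and using the Fenchel equality $(\hat{v} \star q)^{\ast}(x) = \langle x, a \rangle - (\hat{v} \star q)(a)$ at $a = \nabla(\hat{v} \star q)^{\ast}(x)$, the $(\hat{v} \star q)$-contributions cancel and leave
\[
\int \hat{\psi} \, d\nu^{\hat{v}} - \int (\hat{v} \star q)^{\ast} \, d\mu = \int \MCov(\hat{\pi}_{x}, q) \, \mu(dx).
\]
The sandwich $\int \MCov(\hat{\pi}_{x}, q)\,\mu(dx) \leqslant P^{q}(\mu, \nu^{\hat{v}}) = D^{q}(\mu, \nu^{\hat{v}}) \leqslant \int \hat{\psi}\,d\nu^{\hat{v}} - \int (\hat{v} \star q)^{\ast}\,d\mu$ supplied by Theorem \ref{theorem_new_duality_qq} then forces all four quantities to coincide, establishing \ref{qq_smr_qq_ii} and \ref{qq_smr_qq_iii} simultaneously.

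The main obstacle I anticipate is the integrability bookkeeping: verifying that $\hat{\alpha}^{\hat{v}}$ has finite second moments, that $\nabla \hat{v}$ is defined $(\hat{\alpha}^{\hat{v}} \ast q)$-almost everywhere (exploiting that $q$ charges no small set together with the fact that the non-differentiability locus of the convex function $\hat{v}$ is contained in a countable union of $(d-1)$-rectifiable sets), and that the iterated integrals appearing in the dual computation are absolutely convergent. Once these technicalities are settled, the argument reduces to the clean convex-analysis identities above.
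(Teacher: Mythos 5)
Your treatment of part~\ref{qq_smr_qq_i} matches the paper's. For parts~\ref{qq_smr_qq_ii} and~\ref{qq_smr_qq_iii} you take a genuinely more direct route: instead of passing through the auxiliary machinery of Lemmas~\ref{lem_phipsi_con_qq}--\ref{lem_phi_psi_x_gen_opt_qq} (which identify $\varphi^{\hat\psi}$, its conjugate, and the pointwise optimizer of the inner problem \eqref{eq_phi_psi_x_qq}), you compute $\MCov(\hat\pi_x,q)$ from Brenier and verify by hand, via the two Fenchel equalities for the pairs $(\hat v,\hat\psi)$ and $(\hat v\star q,(\hat v\star q)^{\ast})$, that the primal and dual objectives match. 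This is a clean and elementary substitute for the paper's Lemma~\ref{lem_phi_psi_x_gen_opt_qq}, and the cancellation you describe is exactly right.

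However, there is a real gap in how you close the sandwich. You write the dual value as the difference of two separate integrals $\int\hat\psi\,d\nu^{\hat v}-\int(\hat v\star q)^{\ast}\,d\mu$ and invoke $D^{q}(\mu,\nu^{\hat v})\leqslant\int\hat\psi\,d\nu^{\hat v}-\int(\hat v\star q)^{\ast}\,d\mu$ as if $\hat\psi$ were an admissible competitor in \eqref{WeakDual_qq}. But \eqref{WeakDual_qq} requires $\psi\in L^{1}(\nu)$, and the paper's remark following the theorem points out explicitly that conditions \ref{def_psi_v_qbg_i}--\ref{def_psi_v_qbg_iii} do \emph{not} guarantee $\hat\psi\in L^{1}(\nu^{\hat v})$; accordingly, part~\ref{qq_smr_qq_iii} is stated in the relaxed sense of Definition~\ref{def_dual_opt_qq}, with the dual functional $\mathcal{E}(\hat\psi)=\int\bigl(\int\hat\psi\,d\pi_x-(\hat\psi^{\ast}\star q)^{\ast}(x)\bigr)\mu(dx)$ kept as a single integral of a non-negative integrand. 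Your pointwise cancellation argument in fact computes precisely this combined integrand (showing it equals $\MCov(\hat\pi_x,q)$ for $\mu$-a.e.\ $x$), so the substance of your computation survives; but the split into two integrals and the appeal to Theorem~\ref{theorem_new_duality_qq} with $\hat\psi$ as a literal competitor do not. The fix is to replace your sandwich by $\int\MCov(\hat\pi_x,q)\,\mu(dx)\leqslant P^{q}=\tilde D^{q}=D^{q}_{\textnormal{rel}}\leqslant\mathcal{E}(\hat\psi)$, using Proposition~\ref{prop_new_duality_rel.qq} for the last inequality, which is exactly what the paper does. A minor further point: you flag finite second moments of $\hat\alpha^{\hat v}$ as an obstacle, but this is neither assumed nor needed anywhere in Definition~\ref{def_qbm_dt} or in the argument.
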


The proof of Theorem \ref{qq_smr_qq} is presented in Subsection \ref{qq_smr_qq_sec_15} of Section \ref{chap_qbms}. 

\begin{remark} \
\begin{enumerate}[label=(\roman*)] 
\item The conditions \ref{def_psi_v_qbg_i} -- \ref{def_psi_v_qbg_iii} of Definition \ref{def_psi_v_qbg} do not necessarily imply that the function $\hat{\psi} = \hat{v}^{\ast}$ is integrable with respect to the probability measure $\nu^{\hat{v}}$. Therefore attainment of the dual problem \eqref{WeakDual_qq} as claimed in part \ref{qq_smr_qq_iii} of Theorem \ref{qq_smr_qq} has to be understood in a ``relaxed'' sense frequently encountered in martingale transport problems \cite{BJ16, BNT17, BNS22}; see Definition \ref{def_dual_opt_qq} below.
\item Already in the Gaussian case $q = \gamma$, the conditions \ref{def_psi_v_qbg_i} -- \ref{def_psi_v_qbg_iii} of Definition \ref{def_psi_v_qbg} are not only sufficient but also necessary for a result in the form of Theorem \ref{qq_smr_qq} to be true. More precisely, if $q = \gamma$, it follows from the results of \cite{BVBST23} that a function $\hat{v}$ satisfying the properties \ref{qq_smr_qq_i} --  \ref{qq_smr_qq_iii} of Theorem \ref{qq_smr_qq} has to be $\gamma$-Bass martingale generating in the sense of Definition \ref{def_psi_v_qbg}. \hfill $\diamond$
\end{enumerate}
\end{remark}

Finally, we raise the question under which assumptions a $q$-Bass martingale with given marginals $\mu$ and $\nu$ exists. For $\gamma$-Bass martingales this question was answered in \cite[Theorem 1.3]{BVBST23}: a $\gamma$-Bass martingale with marginals $\mu$ and $\nu$ exists if and only if the pair $(\mu,\nu)$ is irreducible in the following sense.

\begin{definition} For probability measures $\mu, \nu$ we say that the pair $(\mu,\nu)$ is \textit{irreducible} if for all measurable sets $A, B \subseteq \Rd$ with $\mu(A)> 0$ and $\nu(B)>0$ there is a martingale $(X_{t})_{0 \leqslant t \leqslant 1}$ with $X_{0} \sim \mu$, $X_{1} \sim \nu$ such that $\mathds{P}(X_{0}\in A, X_{1}\in B) >0$. 
\end{definition} 

Given an irreducible pair $(\mu,\nu)$, we conjecture that a $q$-Bass martingale with marginals $\mu$ and $\nu$ exists, provided that $q$ is absolutely continuous and satisfies suitable integrability conditions.

%%%%%%%%%%%%%%%%%%%%%%%%%%%%%%%%%%%%%%%%%%%%%%%%%%%%%%%%%%%%%%%%%%%%%%%%%%%%%%%%%%%%%%%%%%%%%%%%%%%%
\subsection{Related literature} 
%%%%%%%%%%%%%%%%%%%%%%%%%%%%%%%%%%%%%%%%%%%%%%%%%%%%%%%%%%%%%%%%%%%%%%%%%%%%%%%%%%%%%%%%%%%%%%%%%%%%

The formulation of optimal transport as a field in mathematics goes back to Monge \cite{Mon81} and Kantorovich \cite{Kan42}. The modern theory is based on the seminal contributions of Benamou, Brenier, and McCann \cite{Bre87, Bre91, McC94, McC95, BB99}. We refer to the books \cite{Vil03, Vil09, AG13, San15} for introductions to the field of optimal transport as well as a variety of applications in different areas.

\smallskip 

In martingale optimal transport (see e.g.\ \cite{HN12, BHLP13, TT13, DS14, GHLT14, CLM17} among many others), the transport plan satisfies an additional martingale constraint. This additional requirement leads to notable consequences in mathematical finance \cite{BHLP13}, the study of martingale inequalities \cite{BN15, OST15, HLOST16}, and the Skorokhod embedding problem \cite{BCH17, KTT17, BNS22}. For structural descriptions of optimal martingale transport plans we refer to \cite{BJ16, OS17, DMT19, GKL19}. 

\smallskip

Interesting problems in the field of martingale optimal transport are formulated in continuous time, see e.g.\ \cite{DS14, BHLT17, COT19, GKP19, GLW19, GKL20, CKPS21, GL21}. Most notably, the martingale version of the Benamou--Brenier problem was introduced in \cite{BVBHK20} in probabilistic language and in \cite{HT19} in PDE language. It was then analyzed from the point of view of duality theory in \cite{BVBST23} and complemented by a variational perspective in \cite{BVST23}. In the context of market impact in finance, the same kind of problem appeared independently in \cite{Loe18, Loe23}. For the interpretation of the martingale Benamou--Brenier problem as a local volatility model we refer to the recent articles \cite{BVBHK20, CHL21, AMP23}.

%%%%%%%%%%%%%%%%%%%%%%%%%%%%%%%%%%%%%%%%%%%%%%%%%%%%%%%%%%%%%%%%%%%%%%%%%%%%%%%%%%%%%%%%%%%%%%%%%%%%
\section{Proof of the duality result} \label{p_o_t_d_r_qq}
%%%%%%%%%%%%%%%%%%%%%%%%%%%%%%%%%%%%%%%%%%%%%%%%%%%%%%%%%%%%%%%%%%%%%%%%%%%%%%%%%%%%%%%%%%%%%%%%%%%%

Throughout the rest of this work we fix $\mu, \nu \in \PP_{2}(\Rd)$ with $\mu \lc \nu$. We also fix a reference measure $q \in \PP_{2}(\Rd)$ which does not give mass to small sets. The goal of this chapter is to provide the proof of the duality result as stated in Theorem \ref{theorem_new_duality_qq}. The proof is based on \cite[Theorem 1.4]{BVBST23}, which is formulated in the Gaussian setting $q = \gamma$.

%%%%%%%%%%%%%%%%%%%%%%%%%%%%%%%%%%%%%%%%%%%%%%%%%%%%%%%%%%%%%%%%%%%%%%%%%%%%%%%%%%%%%%%%%%%%%%%%%%%%
\subsection{A first dual formulation} \label{p_o_t_d_r_qq_a}
%%%%%%%%%%%%%%%%%%%%%%%%%%%%%%%%%%%%%%%%%%%%%%%%%%%%%%%%%%%%%%%%%%%%%%%%%%%%%%%%%%%%%%%%%%%%%%%%%%%%

To formulate a first version of a dual problem to \eqref{eq_primal_qq}, we consider the set of continuous functions with quadratic growth 
\[
\Cqs \coloneqq \big\{ \psi \colon \Rd \rightarrow \R \textnormal{ continuous s.t.\ } \exists \, a,k,\ell \in \R \textnormal{ with } 
\ell + \tfrac{\vert \, \cdot \,  \vert^{2}}{2} \leqslant \psi(\, \cdot \,) \leqslant a + k \vert \cdot  \vert^{2} \big\}.
\]
We define the dual problem 
\begin{equation} \label{eq_dual_qq} 
\tilde{D}^{q}(\mu,\nu) \coloneqq 
\inf_{\psi \in \Cqs} \Big( \int \psi \, d\nu - \int \varphi^{\psi} \, d \mu \Big),
\end{equation}
where the function $\Rd \ni x \longmapsto \varphi^{\psi}(x)$ is given by
\begin{equation} \label{eq_phi_psi_x_qq}
\varphi^{\psi}(x) \coloneqq \inf_{p \in \PP_{2}^{x}(\Rd)} \Big( \int \psi \, dp - \MCov(p,q)\Big).
\end{equation}
Here, the set $\PP_{2}^{x}(\Rd)$ denotes the elements $p$ of $\PP_{2}(\Rd)$ with barycenter
\[
\bary(p) = \int y \, p(dy) = x \in \Rd.
\]

\begin{proposition} \label{prop_no_duality_gap_first_qq} There is no duality gap between the primal problem \eqref{eq_primal_qq} and the dual problem \eqref{eq_dual_qq}, i.e., $P^{q}(\mu,\nu) = \tilde{D}^{q}(\mu,\nu)$. Moreover, the primal problem is uniquely attained and has a finite value, i.e., there exists a unique $\hat{\pi} \in \MT(\mu,\nu)$ such that
\begin{equation} \label{eq_the_no_duality_gap_qq}
P^{q}(\mu,\nu) 
= \int \MCov(\hat{\pi}_{x},q) \, \mu(dx) 
< +\infty.
\end{equation}
\begin{proof} For $x \in \Rd$ and $p \in \PP_{2}(\Rd)$ we define the cost function
\[
C(x,p) \coloneqq
\begin{cases}
-\MCov(p,q) + \tfrac{1}{2} \int \vert y \vert^{2} \, dp, & \textnormal{ if } \bary(p) = x,\\
+\infty, & \textnormal{ if } \bary(p) \neq x,
\end{cases}
\]
which is bounded from below and convex in the second argument. If we equip $\PP_{2}(\Rd)$ with the topology induced by the quadratic Wasserstein distance, one verifies that the function $(x,p) \mapsto C(x,p)$ is jointly lower semicontinuous with respect to the product topology on $\Rd \times \PP_{2}(\Rd)$. Hence we can apply \cite[Theorem 1.2]{BVBP19}, which guarantees the existence of an optimizer $\hat{\pi} \in \MT(\mu,\nu)$ of the primal problem \eqref{eq_primal_qq}. This optimizer is unique, as the value $P^{q}(\mu,\nu)$ is finite and $(x,p) \mapsto C(x,p)$ is strictly convex in the second argument. The former follows from the fact that $\mu$ and $\nu$ have finite second moments, the latter is a consequence of Brenier's theorem.

\smallskip

In order to show that there is no duality gap, we introduce the space of continuous functions which are bounded from below and have at most quadratic growth
\[
\Cbqs \coloneqq \big\{ \tilde{\psi} \colon \Rd \rightarrow \R \textnormal{ continuous s.t.\ } \exists \, a,k,\ell \in \R \textnormal{ with } \ell \leqslant \tilde{\psi}(\, \cdot \,) \leqslant a + k \vert \cdot  \vert^{2} \big\}.
\]
Then by \cite[Theorem 1.3]{BVBP19} the value $P^{q}(\mu,\nu)$ of the primal problem is equal to
\[
\tilde{D}_{\textnormal{b},2}^{q}(\mu,\nu) \coloneqq \inf_{\tilde{\psi} \in \Cbqs} \Big( \int \big( \tilde{\psi}(\, \cdot \,) + \tfrac{\vert \, \cdot \, \vert^{2}}{2}\big) \, d\nu - \int \tilde{\varphi}^{\tilde{\psi}} \, d\mu \Big),
\]
where
\[
\tilde{\varphi}^{\tilde{\psi}}(x) \coloneqq \inf_{p \in \PP_{2}^{x}(\Rd)} \Big( \int \big( \tilde{\psi}(\, \cdot \,) + \tfrac{\vert \, \cdot \, \vert^{2}}{2}\big) \, dp - \MCov(p,q)\Big).
\]
Finally, passing from the functions $\tilde{\psi} \in \Cbqs$ to $\psi(\, \cdot \,) \coloneqq \tilde{\psi}(\, \cdot \,) + \frac{\vert \, \cdot \, \vert^{2}}{2} \in \Cqs$, we see that $\tilde{D}_{\textnormal{b},2}^{q}(\mu,\nu) = \tilde{D}^{q}(\mu,\nu)$.
\end{proof}
\end{proposition}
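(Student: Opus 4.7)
The plan is to view the primal problem \eqref{eq_primal_qq} as an instance of the general weak optimal transport framework and invoke the existence and duality machinery of Backhoff-Veraguas and Pammer \cite{BVBP19}, then to rewrite the resulting abstract dual in the form \eqref{eq_dual_qq}. The natural encoding that absorbs the martingale constraint into the cost is to set
\[
C(x,p) \coloneqq -\MCov(p,q) + \tfrac{1}{2}\int \vert y \vert^{2} \, p(dy) \quad \text{if } \bary(p) = x, \qquad +\infty \text{ otherwise.}
\]
Using $\int \vert y \vert^{2}\,d\nu = \int \bigl(\int \vert y \vert^{2}\,\pi_{x}(dy)\bigr)\mu(dx)$ for every $\pi \in \Cpl(\mu,\nu)$ with second marginal $\nu$, the problem $P^{q}(\mu,\nu)$ becomes, up to an additive constant, the minimization of $\int C(x,\pi_{x})\,\mu(dx)$ over $\pi \in \Cpl(\mu,\nu)$. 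The extra quadratic summand makes $C$ bounded below (via Cauchy--Schwarz applied to $\MCov$) and, through the identity $2\,\MCov(p,q) = \int \vert y\vert^{2}\,p(dy) + \int \vert z\vert^{2}\,q(dz) - W_{2}^{2}(p,q)$, exhibits $C$ essentially as $\tfrac{1}{2}W_{2}^{2}(\cdot,q)$ on its effective domain.

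Next I would verify the structural hypotheses required by \cite[Theorem 1.2]{BVBP19}: joint lower semicontinuity of $C$ on $\Rd \times \PP_{2}(\Rd)$ equipped with the Euclidean and $W_{2}$ topologies, convexity in $p$, and boundedness from below. Joint lower semicontinuity follows because the constraint $\{\bary(p) = x\}$ is closed (since $p \mapsto \bary(p)$ is $W_{2}$-continuous) and because $p \mapsto W_{2}^{2}(p,q)$ and $p \mapsto \int \vert y\vert^{2}\,p(dy)$ are $W_{2}$-continuous on $\PP_{2}(\Rd)$. Convexity in $p$ follows from concavity of $\MCov(\cdot,q)$, obtained by mixing couplings. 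These properties give existence of an optimizer $\hat\pi \in \MT(\mu,\nu)$ and finiteness of $P^{q}(\mu,\nu)$. Uniqueness follows from strict convexity of $C(x,\cdot)$, which is where the assumption that $q$ does not charge small sets enters: Brenier's theorem provides a unique optimal transport map $\nabla \varphi_{p}$ pushing $q$ onto $p$, and a short mixing argument upgrades concavity of $\MCov(\cdot,q)$ to strict concavity.

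For the duality I would invoke the companion weak OT duality result \cite[Theorem 1.3]{BVBP19} applied to the cost $C$. This produces a no-gap identity of the form
\[
P^{q}(\mu,\nu) = \inf_{\tilde\psi \in \Cbqs}\Bigl(\int \bigl(\tilde\psi + \tfrac{\vert \cdot \vert^{2}}{2}\bigr)d\nu - \int \tilde\varphi^{\tilde\psi}\,d\mu\Bigr),
\]
where $\tilde\varphi^{\tilde\psi}(x) \coloneqq \inf_{p \in \PP_{2}^{x}(\Rd)} \bigl(\int (\tilde\psi + \tfrac{\vert\cdot\vert^{2}}{2})\,dp - \MCov(p,q)\bigr)$ and $\Cbqs$ is the space of continuous functions bounded below and of at most quadratic growth. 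The substitution $\psi \coloneqq \tilde\psi + \tfrac{\vert \cdot \vert^{2}}{2}$ identifies $\Cbqs$ bijectively with $\Cqs$ and matches the $\nu$-side integrals, giving exactly $\tilde D^{q}(\mu,\nu)$ as defined in \eqref{eq_dual_qq}.

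The main obstacle I anticipate is verifying the structural properties of $C$ that allow the abstract weak OT theorems to apply, particularly the joint lower semicontinuity (which rests on the $\MCov$--$W_{2}^{2}$ identity and the continuity of $W_{2}^{2}(\cdot,q)$ in the second moment topology) and the strict convexity in $p$ (which ultimately depends on Brenier's theorem and hence on the smallness hypothesis for $q$). Once these properties are in place, everything else reduces to invoking \cite{BVBP19} and performing the cosmetic change of variables $\tilde\psi \mapsto \tilde\psi + \tfrac{\vert\cdot\vert^{2}}{2}$.
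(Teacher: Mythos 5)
Your proposal follows the paper's proof essentially verbatim: the same cost function $C(x,p)$ with the martingale constraint encoded via a $+\infty$ penalty, the same appeal to \cite[Theorems 1.2 and 1.3]{BVBP19} for existence and duality, the same uniqueness argument via strict convexity and Brenier's theorem, and the same change of variables $\psi = \tilde\psi + \tfrac{|\cdot|^2}{2}$ passing from $\Cbqs$ to $\Cqs$. The additional remarks on the $\MCov$--$W_2^2$ identity merely make explicit what the paper leaves to the reader when asserting joint lower semicontinuity.
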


We close this section by providing a heuristic argument why \eqref{eq_dual_qq} is indeed a plausible dual formulation of \eqref{eq_primal_qq}. Considering the ``indicator function''
\[
\chi(\pi) \coloneqq 
\inf_{\psi \in \Cqs}  \Big( \int \psi \, d \nu - 
\int \psi \, d\pi_{x} \, d\mu(x)  \Big)
= 
\begin{cases} 
0, & \mbox{if }  \int \pi_{x} \, d\mu(x) = \nu, \\
-\infty, & \mbox{else},
\end{cases} 
\] 
we formally obtain the desired duality relation by interchanging $\inf$ and $\sup$, to wit
\begin{multline*}
P^{q}(\mu,\nu)
=
\sup_{\substack{\pi(dx,dy) = \pi_{x}(dy) \, \mu(dx),  \\ \pi_{x} \in \PP_{2}^{x}(\Rd)}} \, 
\Big(\int \MCov(\pi_{x},q) \, d \mu(x) + \chi(\pi)\Big) \\
= \inf_{\psi \in \Cqs} 
\bigg( \int \sup_{\pi_{x} \in \PP_{2}^{x}(\Rd)} 
\Big( \MCov(\pi_{x},q) + \int \psi \, d(\nu -\pi_{x}) \Big) 
\, d \mu(x) \bigg) 
= \tilde{D}^{q}(\mu,\nu). 
\end{multline*}

%%%%%%%%%%%%%%%%%%%%%%%%%%%%%%%%%%%%%%%%%%%%%%%%%%%%%%%%%%%%%%%%%%%%%%%%%%%%%%%%%%%%%%%%%%%%%%%%%%%%
\subsection{Convexity of the dual variables} \label{p_o_t_d_r_qq_b}
%%%%%%%%%%%%%%%%%%%%%%%%%%%%%%%%%%%%%%%%%%%%%%%%%%%%%%%%%%%%%%%%%%%%%%%%%%%%%%%%%%%%%%%%%%%%%%%%%%%%

In this section we make the crucial observation that in the dual problem \eqref{eq_dual_qq} it suffices to optimize over the class of functions $\psi \in \Cqs$ which are convex. We then show that it is also equivalent to optimize over all convex functions $\psi \colon \Rd \rightarrow (-\infty,+\infty]$ which are only $\mu$-a.s.\ finite, but not necessarily of quadratic growth.

\smallskip

Given some $\psi \in \Cqs$, we denote by $\conv\psi$ the convex hull of $\psi$, i.e., the greatest convex function smaller or equal to $\psi$. It will be convenient to have an explicit representation for the convex hull of a function, as in \eqref{lem_conv_eq_1_qq} below. This identity is usually stated in the more specific form 
\[
(\conv\psi)(y) 
= \inf \bigg\{ \sum_{i=1}^{d+1} \lambda_{i} \psi(y_{i}) \colon \sum_{i=1}^{d+1} \lambda_{i} y_{i} = y \bigg\},
\]
where the infimum is taken over all expressions of $y$ as a convex combination of $d+1$ points, see \cite[Corollary 17.1.5]{Roc70}. 

\begin{lemma} \label{lem_conv_qq} For a function $\psi \in \Cqs$ its convex hull $\conv\psi$ is given by
\begin{equation} \label{lem_conv_eq_1_qq}
\Rd \ni y \longmapsto (\conv\psi)(y) = \inf_{p \in \PP_{2}^{y}(\Rd)} \int \psi \, dp
\end{equation}
and again $\conv\psi \in \Cqs$.
\end{lemma}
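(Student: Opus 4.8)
The plan is to show two inequalities between the right-hand side of \eqref{lem_conv_eq_1_qq}, call it $g(y) \coloneqq \inf_{p \in \PP_{2}^{y}(\Rd)} \int \psi \, dp$, and $(\conv\psi)(y)$, and then verify the growth bounds. First I would note that $g$ is convex: this follows from a standard mixing argument, since if $p_i \in \PP_{2}^{y_i}(\Rd)$ and $\lambda_i \geqslant 0$ with $\sum \lambda_i = 1$, then $\sum \lambda_i p_i \in \PP_{2}^{\sum \lambda_i y_i}(\Rd)$ and $\int \psi \, d(\sum \lambda_i p_i) = \sum \lambda_i \int \psi \, dp_i$; taking infima gives the convexity inequality $g(\sum \lambda_i y_i) \leqslant \sum \lambda_i g(y_i)$. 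Moreover $g \leqslant \psi$, since one can take $p = \delta_y$. Hence $g$ is a convex function dominated by $\psi$, so $g \leqslant \conv\psi$ by maximality of the convex hull.

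For the reverse inequality $g \geqslant \conv\psi$, I would invoke the explicit finite-dimensional representation of the convex hull cited in the excerpt from \cite[Corollary 17.1.5]{Roc70}: $(\conv\psi)(y) = \inf\{\sum_{i=1}^{d+1}\lambda_i \psi(y_i) : \sum_{i=1}^{d+1}\lambda_i y_i = y\}$, the infimum over convex combinations of $d+1$ points. Given any $p \in \PP_{2}^{y}(\Rd)$, the point $y = \bary(p)$ lies in the convex hull of $\operatorname{supp}(p)$, but to compare $\int \psi \, dp$ with a finite convex combination I would use Jensen-type reasoning for the convex minorant: since $\conv\psi$ is convex and $\conv\psi \leqslant \psi$, we have $\int \psi \, dp \geqslant \int \conv\psi \, dp \geqslant (\conv\psi)(\bary(p)) = (\conv\psi)(y)$, where the middle step is Jensen's inequality applied to the convex function $\conv\psi$ and the probability measure $p$ (valid because $p$ has finite second moment and $\conv\psi$ has at most quadratic growth, which I establish below, so the integral is well-defined in $(-\infty, +\infty]$). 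Taking the infimum over $p \in \PP_{2}^{y}(\Rd)$ yields $g(y) \geqslant (\conv\psi)(y)$. Combined with the previous paragraph, $g = \conv\psi$.

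It remains to check $\conv\psi \in \Cqs$, i.e., the two-sided quadratic bound. The upper bound is immediate: $\conv\psi \leqslant \psi \leqslant a + k\vert\cdot\vert^2$. For the lower bound, the function $\ell + \frac{\vert\cdot\vert^2}{2}$ is itself convex and bounded above by $\psi$, hence bounded above by $\conv\psi$ (maximality again), giving $\conv\psi \geqslant \ell + \frac{\vert\cdot\vert^2}{2}$. Continuity of $\conv\psi$ on all of $\Rd$ follows because a convex function finite everywhere on $\Rd$ is automatically continuous \cite[Theorem 10.1]{Roc70}. The main obstacle is purely a matter of care rather than depth: one must make sure the quadratic growth bound on $\conv\psi$ is in place \emph{before} invoking Jensen's inequality in the reverse direction, so that $\int \conv\psi \, dp$ is meaningfully defined and the chain $\int \psi \, dp \geqslant \int \conv\psi \, dp \geqslant (\conv\psi)(\bary(p))$ is legitimate for every $p \in \PP_2^y(\Rd)$; ordering the argument so that the growth bounds come first (or are derived independently of the representation formula) removes this circularity.
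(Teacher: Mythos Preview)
The paper states this lemma without proof, treating it as a routine fact and only remarking beforehand on the classical finite-dimensional version from \cite[Corollary 17.1.5]{Roc70}. Your argument is correct and self-contained: the inequality $g \leqslant \conv\psi$ via convexity of $g$ and maximality of the convex hull, together with the reverse inequality via $\int \psi \, dp \geqslant \int \conv\psi \, dp \geqslant (\conv\psi)(\bary(p))$ from Jensen's inequality, gives the representation formula cleanly. You are also right that the two-sided quadratic bound on $\conv\psi$ follows directly from the definition of $\conv\psi$ as the greatest convex minorant of $\psi$ (the lower bound because $\ell + \tfrac{1}{2}\vert\cdot\vert^{2}$ is one such convex minorant, the upper bound because $\conv\psi \leqslant \psi$), so establishing those bounds first removes any circularity in the Jensen step. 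One small remark: your reference to the finite Carath\'eodory-type representation is ultimately not used---your Jensen argument bypasses it entirely---so you could drop that sentence without loss.
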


Recalling the dual problem \eqref{eq_dual_qq}, we define the dual function 
\begin{equation} \label{eq_def_dual_func_qq}
\Cqs \ni \psi \longmapsto \tilde{\mathcal{D}}(\psi) \coloneqq \int \psi \, d\nu - \int \varphi^{\psi} \, d \mu.
\end{equation}
Now we prove that it suffices to optimize the dual function over the class of functions $\psi \in \Cqs$ which are convex. 

\begin{proposition} \label{prop_crucial_qq} For all $\psi \in \Cqs$ we have $\tilde{\mathcal{D}}(\conv \psi) \leqslant \tilde{\mathcal{D}}(\psi)$ and consequently
\begin{equation} \label{eq_prop_crucial_m_qq}
\tilde{D}^{q}(\mu,\nu) 
= \inf_{\psi \in \Cqs} \tilde{\mathcal{D}}(\psi) 
= \inf_{\substack{\psi \in \Cqs, \\ \textnormal{$\psi$ convex}}}\tilde{\mathcal{D}}(\psi).
\end{equation}
\begin{proof} Let $\varepsilon > 0$, $\psi \in \Cqs$ and $\{p_{x}\}_{x \in \Rd} \subseteq \PP_{2}(\Rd)$ be a measurable collection of probability measures with $\bary(p_{x}) = x$. To show the claim, it is sufficient to construct a measurable family $\{\bar{p}_{x}\}_{x \in \Rd} \subseteq \PP_{2}(\Rd)$ with $\bary(\bar{p}_{x}) = x$ such that
\begin{equation} \label{prop_crucial_01.qq}
\MCov(p_{x},q) + \int \conv \psi \, d(\nu-p_{x}) 
\leqslant \MCov(\bar{p}_{x},q) + \int \psi \, d(\nu-\bar{p}_{x}) + \varepsilon.
\end{equation}
By Lemma \ref{lem_conv_qq} and a measurable selection argument we can choose a measurable collection of probability measures $\{\tilde{p}_{y}\}_{y \in \Rd} \subseteq \PP_{2}(\Rd)$ with $\bary(\tilde{p}_{y}) = y$ such that 
\begin{equation} \label{prop_crucial_02.qq}
\int \psi \, d\tilde{p}_{y}  \leqslant  (\conv\psi)(y) + \varepsilon.
\end{equation}
Then we define $\bar{p}_{x}(dz) \coloneqq \int_{y} \tilde{p}_{y}(dz) \, p_{x}(dy)$, so that $\bary(\bar{p}_{x}) = x$. Integrating \eqref{prop_crucial_02.qq} with respect to $p_{x}(dy)$ yields
\begin{equation} \label{prop_crucial_03.qq}
\int \psi \, d\bar{p}_{x}  
\leqslant  \int \conv\psi \, dp_{x} + \varepsilon.
\end{equation}
Since $\psi, \conv \psi \in \Cqs$ and $p_{x} \in \PP_{2}^{x}(\Rd)$ we conclude
\[
\ell + \tfrac{1}{2} \int \vert y  \vert^{2} \, \bar{p}_{x}(dy)
\leqslant a + k \int \vert y  \vert^{2} \, p_{x}(dy) +  \varepsilon < + \infty,
\]
so that $\bar{p}_{x} \in \PP_{2}^{x}(\Rd)$.

\smallskip

In order to show the inequality \eqref{prop_crucial_01.qq}, we first observe that $p_{x} \lc \bar{p}_{x}$ by Jensen's inequality. Together with the Kantorovich duality (see, e.g., \cite[Theorem 5.10]{Vil09}), we conclude that\footnote{In fact, $p_{x} \lc \bar{p}_{x}$ is equivalent to the inequality $\MCov(p_{x},q) \leqslant \MCov(\bar{p}_{x},q)$ being valid for all probability measures $q \in \PP_{2}(\Rd)$; see., e.g., \cite[Theorem 1]{AP22} or \cite[Corollary 1.2]{WZ23}.}
\begin{equation} \label{prop_crucial_04.qq}
\begin{aligned}
\MCov(p_{x},q) 
&= \inf_{\substack{f \colon \Rd \rightarrow \R \\ \textnormal{convex}}} 
\Big( \int f \, d p_{x} + \int f^{\ast} \, dq \Big) \\
&\leqslant \inf_{\substack{f \colon \Rd \rightarrow \R \\ \textnormal{convex}}} 
\Big( \int f \, d \bar{p}_{x} + \int f^{\ast} \, dq \Big)
= \MCov(\bar{p}_{x},q).
\end{aligned}
\end{equation}
On the other hand, from $\conv \psi \leqslant \psi$ and \eqref{prop_crucial_03.qq} we have the inequality 
\begin{equation} \label{prop_crucial_05.qq}
\int \conv \psi \, d(\nu-p_{x})  
\leqslant \int \psi \, d(\nu-\bar{p}_{x}) + \varepsilon.
\end{equation}
Finally, summing \eqref{prop_crucial_04.qq} and \eqref{prop_crucial_05.qq}, we obtain the inequality \eqref{prop_crucial_01.qq}.
\end{proof}
\end{proposition}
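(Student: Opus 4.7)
The plan is to establish $\tilde{\mathcal{D}}(\conv\psi) \leqslant \tilde{\mathcal{D}}(\psi)$ via the pointwise-in-$x$ comparison
\[
\MCov(p_{x},q) + \int \conv\psi \, d(\nu - p_{x}) \leqslant \MCov(\bar{p}_{x},q) + \int \psi \, d(\nu - \bar{p}_{x}) + \varepsilon,
\]
valid for an arbitrary measurable family $\{p_{x}\} \subseteq \PP_{2}(\Rd)$ with $\bary(p_{x}) = x$ and a suitably constructed measurable family $\{\bar{p}_{x}\}$ with $\bary(\bar{p}_{x}) = x$. Integrating against $\mu$, choosing $\{p_{x}\}$ to be $\varepsilon$-optimal in the defining infimum of $\varphi^{\conv\psi}$, and using that $\bar{p}_{x} \in \PP_{2}^{x}(\Rd)$ bounds the resulting integrand from below by $\int \psi \, d\nu - \varphi^{\psi}(x)$, then yields $\tilde{\mathcal{D}}(\conv\psi) \leqslant \tilde{\mathcal{D}}(\psi) + O(\varepsilon)$; letting $\varepsilon \downarrow 0$ gives the first claim, and the identity \eqref{eq_prop_crucial_m_qq} follows since $\conv\psi \in \Cqs$ is convex.

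The construction of $\{\bar{p}_{x}\}$ proceeds by mixing. By Lemma \ref{lem_conv_qq} one has $(\conv\psi)(y) = \inf_{\tilde{p}_{y} \in \PP_{2}^{y}(\Rd)} \int \psi \, d\tilde{p}_{y}$, and a Kuratowski--Ryll-Nardzewski-type measurable selection delivers a jointly measurable family $\{\tilde{p}_{y}\}$ with $\int \psi \, d\tilde{p}_{y} \leqslant (\conv\psi)(y) + \varepsilon$. I would then set
\[
\bar{p}_{x}(dz) := \int \tilde{p}_{y}(dz) \, p_{x}(dy),
\]
whose barycenter equals $x$ since $\bary(\tilde{p}_{y}) = y$, and whose second moment is finite by the quadratic growth bounds built into $\Cqs$. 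The pointwise inequality splits into two pieces. First, Jensen's inequality applied to convex test functions gives $p_{x} \lc \bar{p}_{x}$, which combined with the Kantorovich duality representation
\[
\MCov(\rho,q) = \inf_{f \textnormal{ convex}} \Big( \int f \, d\rho + \int f^{\ast} \, dq \Big)
\]
yields $\MCov(p_{x},q) \leqslant \MCov(\bar{p}_{x},q)$. Second, integrating the selection inequality against $p_{x}$ produces $\int \psi \, d\bar{p}_{x} \leqslant \int \conv\psi \, dp_{x} + \varepsilon$, which together with $\conv\psi \leqslant \psi$ (hence $\int \conv\psi \, d\nu \leqslant \int \psi \, d\nu$) gives $\int \conv\psi \, d(\nu - p_{x}) \leqslant \int \psi \, d(\nu - \bar{p}_{x}) + \varepsilon$.

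The main obstacle is the measurable selection step: one must verify joint measurability of $y \mapsto \tilde{p}_{y}$ respecting the barycenter constraint $\bary(\tilde{p}_{y}) = y$. For this I would leverage that $p \mapsto \int \psi \, dp$ is lower semicontinuous on $\PP_{2}(\Rd)$ under the quadratic Wasserstein topology (since $\psi \in \Cqs$ is continuous with quadratic growth), that $y \mapsto (\conv\psi)(y)$ is continuous as a finite convex function in $\Cqs$, and that the barycenter constraint defines a closed-graph multifunction; a standard Kuratowski--Ryll-Nardzewski or Castaing-type selection theorem then delivers a Borel-measurable $\tilde{p}_{y}$. The subsequent measurability of $x \mapsto \bar{p}_{x}$ is routine via Fubini.
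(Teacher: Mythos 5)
Your proposal follows essentially the same route as the paper's own proof: reduce to a pointwise-in-$x$ inequality, construct $\bar{p}_x$ by mixing a measurably selected $\varepsilon$-optimal family $\{\tilde{p}_y\}$ against $p_x$, establish $p_x \lc \bar{p}_x$ via Jensen and invoke Kantorovich duality to compare maximal covariances, and combine with $\conv\psi \leqslant \psi$ and the selection inequality. The only addition is that you flesh out the measurable selection argument (which the paper states without elaboration), and make explicit the $\varepsilon$-optimal choice of $\{p_x\}$ in the final integration against $\mu$; both are consistent with what the paper does implicitly.
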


Next, we extend the definition \eqref{eq_def_dual_func_qq} of the dual function $\tilde{\mathcal{D}}(\, \cdot \,)$ to the class of convex functions $\psi \colon \Rd \rightarrow (-\infty,+\infty]$ which are not confined to be in $\Cqs$, but which are only required to satisfy $\mu(\dom \psi) = 1$, where 
\[
\dom \psi \coloneqq \{ y \in \Rd \colon \psi(y) < + \infty \}
\]
denotes the domain of $\psi$. In order for the difference of the integrals in \eqref{eq_def_dual_func_qq} still to be well-defined, the dual function has to be understood in a ``relaxed'' sense, as frequently encountered in martingale transport problems; see \cite{BJ16, BNT17, BNS22}. More precisely, we define the ``relaxed'' representation of \eqref{eq_def_dual_func_qq} by
\begin{equation} \label{eq_rep_dual_func_qq}
\tilde{\mathcal{E}}(\psi) \coloneqq \int  \Big( \int \psi(y) \, \pi_{x}(dy)  - \varphi^{\psi}(x) \Big) \, \mu(dx),
\end{equation}
where $\pi(dx,dy) = \pi_{x}(dy) \, \mu(dx)$ is an arbitrary fixed element of $\MT(\mu,\nu)$. Then $\tilde{\mathcal{E}}(\psi)$ is well-defined for every convex function $\psi$ satisfying $\mu(\dom \psi) = 1$. Taking the infimum of \eqref{eq_rep_dual_func_qq} over all such $\psi$ again leads to the same value as in \eqref{eq_prop_crucial_m_qq}. This is summarized in the following lemma.

\begin{lemma} \label{lem_ext_def_dual.qq} Fix $\pi \in \MT(\mu,\nu)$ and let $\psi \colon \Rd \rightarrow (-\infty,+\infty]$ be a convex function with $\mu(\dom \psi) = 1$. Then $\tilde{\mathcal{E}}(\psi) \in [0,+\infty]$ and we have the inequality
\begin{equation} \label{lem_ext_dual_func_01.qq}
\tilde{D}^{q}(\mu,\nu) \leqslant \tilde{\mathcal{E}}(\psi).
\end{equation}
In particular, recalling \eqref{eq_prop_crucial_m_qq}, we have
\begin{equation} \label{lem_ext_dual_func_02.qq}
\tilde{D}^{q}(\mu,\nu) 
= \inf_{\substack{\psi \in \Cqs, \\ \textnormal{$\psi$ convex}}}\tilde{\mathcal{D}}(\psi) 
= \inf_{\substack{\mu(\dom \psi) = 1, \\ \textnormal{$\psi$ convex}}}\tilde{\mathcal{E}}(\psi).
\end{equation}
\begin{proof} Fix $\pi \in \MT(\mu,\nu)$ and let $\psi \colon \Rd \rightarrow (-\infty,+\infty]$ be a convex function with $\mu(\dom \psi) = 1$. First, note that by Jensen's inequality we have
\[
\int \psi(y) \, \pi_{x}(dy) 
\geqslant \psi\Big(\int y \, \pi_{x}(dy)\Big) 
= \psi(x), 
\]
and by taking $p = \delta_{x}$ in \eqref{eq_phi_psi_x_qq} we obtain $\varphi^{\psi}(x) \leqslant \psi(x)$. Hence
\begin{equation} \label{eq_ext_dual_func_03.qq}
\int \psi(y) \, \pi_{x}(dy)  - \varphi^{\psi}(x) \geqslant 0,
\end{equation}
for $\mu$-a.e.\ $x \in \Rd$, so that $\tilde{\mathcal{E}}(\psi) \in [0,+\infty]$. In order to prove the inequality \eqref{lem_ext_dual_func_01.qq}, we distinguish two cases. In the case $\int \psi \, d \pi_{x} = + \infty$, for $x \in \Rd$ in a set of positive $\mu$-measure, we conclude from \eqref{eq_ext_dual_func_03.qq} that $\tilde{\mathcal{E}}(\psi) = +\infty$, so that \eqref{lem_ext_dual_func_01.qq} is trivially satisfied. Now suppose that $\int \psi \, d \pi_{x} < + \infty$, for $\mu$-a.e.\ $x \in \Rd$. Recall from Proposition \ref{prop_no_duality_gap_first_qq} that there exists an optimizer $\hat{\pi} \in \MT(\mu,\nu)$ of the primal problem \eqref{eq_primal_qq}. Taking $p = \hat{\pi}_{x}$ in \eqref{eq_phi_psi_x_qq} we get
\[
\varphi^{\psi}(x) \leqslant  \int \psi \, d\hat{\pi}_{x} - \MCov(\hat{\pi}_{x},q)
\]
and therefore 
\[
\tilde{\mathcal{E}}(\psi) \geqslant 
\int \Big(  \int \psi \, d \pi_{x} + \MCov(\hat{\pi}_{x},q) - \int \psi \, d\hat{\pi}_{x}  \Big) \, \mu(dx) 
= P^{q}(\mu,\nu),
\]
where we also used that both $\hat{\pi}$ and $\pi$ have $\nu$ as second marginal, and that $\hat{\pi}$ is a primal optimizer, i.e., satisfying \eqref{eq_the_no_duality_gap_qq}. Since $P^{q}(\mu,\nu) = \tilde{D}^{q}(\mu,\nu)$ by Proposition \ref{prop_no_duality_gap_first_qq}, we again see that \eqref{lem_ext_dual_func_01.qq} does hold.

\smallskip

Finally, from \eqref{lem_ext_dual_func_01.qq} and recalling \eqref{eq_prop_crucial_m_qq}, we conclude
\[
\tilde{D}^{q}(\mu,\nu) 
\leqslant \inf_{\substack{\mu(\dom \psi) = 1, \\ \textnormal{$\psi$ convex}}}\tilde{\mathcal{E}}(\psi) 
\leqslant \inf_{\substack{\psi \in \Cqs, \\ \textnormal{$\psi$ convex}}}\tilde{\mathcal{D}}(\psi) 
= \tilde{D}^{q}(\mu,\nu),
\]
which shows \eqref{lem_ext_dual_func_02.qq}. 
\end{proof}
\end{lemma}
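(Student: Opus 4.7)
The plan is to prove the two assertions in sequence. For the first---that $\tilde{\mathcal{E}}(\psi) \in [0, +\infty]$---I would establish pointwise nonnegativity of the integrand $x \mapsto \int \psi(y) \, \pi_x(dy) - \varphi^{\psi}(x)$ appearing in \eqref{eq_rep_dual_func_qq}. The martingale property $\bary(\pi_x) = x$ combined with Jensen's inequality yields $\int \psi \, d\pi_x \geqslant \psi(x)$, while plugging the Dirac mass $p = \delta_x$ into \eqref{eq_phi_psi_x_qq} gives $\varphi^{\psi}(x) \leqslant \psi(x)$. Since $\psi(x) < +\infty$ for $\mu$-a.e.\ $x$ by hypothesis, the difference is well-defined and nonnegative $\mu$-almost surely, so $\tilde{\mathcal{E}}(\psi) \in [0, +\infty]$.

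For the inequality $\tilde{D}^{q}(\mu,\nu) \leqslant \tilde{\mathcal{E}}(\psi)$ I would split into two cases. If $\int \psi \, d\pi_x = +\infty$ on a set of positive $\mu$-measure, the pointwise nonnegativity just obtained forces $\tilde{\mathcal{E}}(\psi) = +\infty$ and the claim is automatic. Otherwise, I would invoke the primal optimizer $\hat{\pi} \in \MT(\mu,\nu)$ furnished by Proposition \ref{prop_no_duality_gap_first_qq}: testing $\varphi^{\psi}$ with $p = \hat{\pi}_x$ gives $\varphi^{\psi}(x) \leqslant \int \psi \, d\hat{\pi}_x - \MCov(\hat{\pi}_x, q)$. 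Substituting this into \eqref{eq_rep_dual_func_qq} and using that both $\pi$ and $\hat{\pi}$ have second marginal $\nu$, the $\psi$-terms cancel upon integration against $\mu$, leaving $\tilde{\mathcal{E}}(\psi) \geqslant \int \MCov(\hat{\pi}_x, q) \, \mu(dx) = P^{q}(\mu,\nu) = \tilde{D}^{q}(\mu,\nu)$, where the final equality is the no-gap conclusion of Proposition \ref{prop_no_duality_gap_first_qq}. This is the step I expect to be the main technical move: when $\psi$ may take the value $+\infty$, a direct approximation by $\Cqs$-functions is awkward, and the cleanest route is to transfer the estimate to the primal side via the already-established primal attainment and no-gap result.

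The chain of equalities in \eqref{lem_ext_dual_func_02.qq} then follows by a sandwich argument. The inequality just proved gives $\tilde{D}^{q}(\mu,\nu) \leqslant \inf_{\psi} \tilde{\mathcal{E}}(\psi)$ with the infimum running over convex $\psi$ satisfying $\mu(\dom \psi) = 1$. Conversely, any convex $\psi \in \Cqs$ belongs to this larger class, and for such $\psi$ the at-most-quadratic growth together with the finite second moments of $\mu, \nu$ makes Fubini applicable, so $\int \int \psi(y) \, \pi_x(dy) \, \mu(dx) = \int \psi \, d\nu$ and hence $\tilde{\mathcal{E}}(\psi) = \tilde{\mathcal{D}}(\psi)$. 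Therefore $\inf_{\psi} \tilde{\mathcal{E}}(\psi) \leqslant \inf_{\psi \in \Cqs,\,\textnormal{convex}} \tilde{\mathcal{D}}(\psi)$, which by Proposition \ref{prop_crucial_qq} equals $\tilde{D}^{q}(\mu,\nu)$, closing the sandwich.
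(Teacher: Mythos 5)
Your proposal is correct and follows essentially the same route as the paper: Jensen plus the Dirac test for nonnegativity, the same two-case split, transferring to the primal side via $\hat{\pi}_x$ and the no-gap result of Proposition \ref{prop_no_duality_gap_first_qq}, and the identical sandwich for \eqref{lem_ext_dual_func_02.qq}. Your explicit mention of Fubini to justify $\tilde{\mathcal{E}}(\psi) = \tilde{\mathcal{D}}(\psi)$ for convex $\psi \in \Cqs$ is a small clarifying addition the paper leaves implicit, but it does not change the argument.
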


%%%%%%%%%%%%%%%%%%%%%%%%%%%%%%%%%%%%%%%%%%%%%%%%%%%%%%%%%%%%%%%%%%%%%%%%%%%%%%%%%%%%%%%%%%%%%%%%%%%%
\subsection{Proof of Theorem \texorpdfstring{\ref{theorem_new_duality_qq}}{1.1}} \label{p_o_t_d_r_qq_c}
%%%%%%%%%%%%%%%%%%%%%%%%%%%%%%%%%%%%%%%%%%%%%%%%%%%%%%%%%%%%%%%%%%%%%%%%%%%%%%%%%%%%%%%%%%%%%%%%%%%%

According to Proposition \ref{prop_no_duality_gap_first_qq}, there is no duality gap between the primal problem \eqref{eq_primal_qq} and the dual problem \eqref{eq_dual_qq}, i.e., $P^{q}(\mu,\nu) = \tilde{D}^{q}(\mu,\nu)$. Hence for the proof of Theorem \ref{theorem_new_duality_qq} we have to show that
\begin{equation} \label{p_o_t_d_r_qq_c.eq}
\tilde{D}^{q}(\mu,\nu) = \inf_{\substack{\psi \in L^{1}(\nu), \\ \textnormal{$\psi$ convex}}} \Big( \int \psi \, d\nu - \int (\psi^{\ast} \star q)^{\ast} \, d \mu \Big).
\end{equation}
Recalling the definitions \eqref{eq_dual_qq}, \eqref{eq_phi_psi_x_qq}, and \eqref{eq_def_dual_func_qq}, it should come as no surprise that the main idea behind the proof of the identity \eqref{p_o_t_d_r_qq_c.eq} is to apply Proposition \ref{prop_crucial_qq} and to show that $\varphi^{\psi} = (\psi^{\ast} \star q)^{\ast}$ holds for every convex function $\psi \in \Cqs$. This motivates our next goal, namely to solve the minimization problem \eqref{eq_phi_psi_x_qq}, which we rewrite as a maximization problem, to wit
\begin{equation} \label{eq_phi_psi_x_max.qq}
-\varphi^{\psi}(x) = \sup_{p \in \PP_{2}^{x}(\Rd)} \Big( \MCov(p,q) - \int \psi \, dp\Big).
\end{equation}
As a preliminary step, we consider the simpler problem
\begin{equation} \label{eq_phi_psi.qq}
\phi^{\psi} \coloneqq \sup_{p \in \PP_{2}(\Rd)} \Big( \MCov(p,q) - \int \psi \, dp \Big),
\end{equation}
where we do not prescribe the barycenter $x \in \Rd$ of $p \in \PP_{2}(\Rd)$. In Lemma \ref{lem_phi_psi_gen.qq} below we will show for an arbitrary proper convex function $\psi \colon \Rd \rightarrow (-\infty,+\infty]$, that the value $\phi^{\psi}$ of \eqref{eq_phi_psi.qq} is equal to $\int \psi^{\ast} \, dq$. Recall that $\psi$ is called a proper convex function if $\psi$ is convex and $\dom \psi \neq \varnothing$. 

\smallskip

Solving the maximization problem \eqref{eq_phi_psi.qq} leads to an interesting connection with Brenier maps, see Lemma \ref{lem_phi_psi.qq} below. By Brenier's theorem (see, e.g., \cite[Theorem 2.12]{Vil03}), the optimal transport for quadratic cost between $q \in \PP_2(\Rd)$ and $p \in \PP_2(\Rd)$ is induced by the $q$-a.e.\ defined gradient $\nabla v$ (the ``Brenier map'') of some convex function $v \colon \Rd \rightarrow \R$ via $(\nabla v)(q) = p$. 

\begin{lemma} \label{lem_phi_psi.qq} Let $v \colon \Rd \rightarrow \R$ be a finite-valued convex function and $\psi \coloneqq v^{\ast}$ its convex conjugate. Assume that the probability measure $\hat{p} \coloneqq (\nabla v)(q)$ has finite second moment. Then $\hat{p}$ is the unique maximizer of the optimization problem \eqref{eq_phi_psi.qq} and
\[
\phi^{\psi} = \int v \, dq = \int \psi^{\ast} \, dq < + \infty.
\]
\begin{proof} For $p \in \PP_2(\Rd)$, we denote by $T_{q}^{p}$ the Brenier map from $q$ to $p \in \PP_{2}(\Rd)$ and note that $T_{q}^{\hat{p}} = \nabla v$ by Brenier's theorem. Furthermore, we have that
\[
v(z) = 
\sup_{y \in \Rd} \big(\langle y,z \rangle - v^{\ast}(y)\big) 
= \big\langle \nabla v(z),z \big\rangle - v^{\ast}\big( \nabla v(z)\big),
\]
for $q$-a.e.\ $z \in \Rd$. Using these observations, for $p \in \PP_{2}(\Rd)$ we get
\begin{align*}
\MCov(p,q) - \int \psi \, dp 
&= \int \Big( \big\langle T_{q}^{p}(z), z\big\rangle 
-  v^{\ast}\big(T_{q}^{p}(z)\big) \Big)  \, q(dz) \\
&\leqslant \int \sup_{y \in \Rd} \big( \langle y, z\rangle 
-  v^{\ast}(y) \big)  \, q(dz) 
= \int v(z)  \, q(dz) \\
&= \int \Big( \big\langle \nabla v(z),z \big\rangle - v^{\ast}\big( \nabla v(z)\big) \Big)  \, q(dz) \\
&= \MCov(\hat{p},q) - \int \psi \, d\hat{p},
\end{align*}
with equality if and only if $T_{q}^{p}(z) = T_{q}^{\hat{p}}(z)$, for $q$-a.e.\ $z \in \Rd$. This in turn is the case if and only if $p = \hat{p}$. Finally, from the convexity of $v$ and the Cauchy--Schwarz inequality we obtain
\[
\int \vert v \vert \, d q \leqslant \vert v(0) \vert + 
\sqrt{\int \vert \nabla v \vert^{2} \, d q} \
\sqrt{\int \vert z \vert^{2} \, d q(z)}
< + \infty,
\]
which proves that $\phi^{\psi} < +\infty$. 
\end{proof}
\end{lemma}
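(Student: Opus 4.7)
The plan is to parametrize every $p \in \PP_{2}(\Rd)$ by its Brenier map from $q$, rewrite the objective of \eqref{eq_phi_psi.qq} as a pointwise expression involving $v$ and $v^{\ast}$, and then exploit the Fenchel--Young inequality. Since $q$ does not give mass to small sets, Brenier's theorem provides, for each $p \in \PP_{2}(\Rd)$, a $q$-a.e.\ unique transport map $T_{q}^{p}$ from $q$ to $p$ that is the gradient of a convex function and is optimal for quadratic cost. In particular $\MCov(p,q) = \int \langle z, T_{q}^{p}(z)\rangle \, q(dz)$, while $\int \psi \, dp = \int v^{\ast}(T_{q}^{p}(z)) \, q(dz)$, so the objective becomes $\int \bigl( \langle z, T_{q}^{p}(z)\rangle - v^{\ast}(T_{q}^{p}(z)) \bigr) \, q(dz)$.

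I would then apply the pointwise Fenchel--Young inequality $\langle z,y\rangle - v^{\ast}(y) \leqslant v(z)$ to obtain the uniform upper bound $\int v \, dq$ on the objective. Because $v$ is finite-valued and convex it is differentiable $q$-a.e., so the equality case in Fenchel--Young forces $y = \nabla v(z)$. To verify that this bound is attained at $\hat{p} = (\nabla v)(q)$, the key observation is that $\nabla v$ is itself the gradient of a convex function that pushes $q$ forward to $\hat{p}$; the uniqueness clause of Brenier's theorem therefore pins down $T_{q}^{\hat{p}} = \nabla v$ $q$-a.e., so equality holds precisely at $p = \hat{p}$. This simultaneously yields $\phi^{\psi} = \int v \, dq$ and uniqueness of the optimizer, since the Fenchel--Young inequality is strict off the equality set.

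Finiteness reduces to a short growth estimate: the subgradient inequalities applied at $0$ and at $z$ yield $v(0) + \langle \nabla v(0), z\rangle \leqslant v(z) \leqslant v(0) + \langle \nabla v(z), z\rangle$, and Cauchy--Schwarz in $L^{2}(q)$, combined with $\nabla v \in L^{2}(q)$ (which is exactly the hypothesis $\hat{p} \in \PP_{2}(\Rd)$) and $q \in \PP_{2}(\Rd)$, gives $\int \vert v \vert \, dq < +\infty$. The remaining identity $\int v \, dq = \int \psi^{\ast} \, dq$ is just $v = v^{\ast\ast} = \psi^{\ast}$, valid for $v$ finite-valued and convex. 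The only delicate step is the identification $T_{q}^{\hat{p}} = \nabla v$: it is not deep, but it is easy to overlook that one must invoke the uniqueness part of Brenier's theorem for the convex gradient $\nabla v$, rather than re-derive it, in order to confirm that $\nabla v$ is the transport realizing $\MCov(\hat{p},q)$.
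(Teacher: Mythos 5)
Your proposal is correct and follows essentially the same route as the paper: parametrize $p$ by its Brenier map $T_q^p$, bound the objective via the Fenchel--Young inequality $\langle z,y\rangle - v^{\ast}(y) \leqslant v(z)$, identify $T_q^{\hat p} = \nabla v$ by uniqueness in Brenier's theorem to obtain attainment and uniqueness, and establish finiteness by Cauchy--Schwarz using $\nabla v \in L^2(q)$ and $q \in \PP_2(\Rd)$. The paper compresses the growth estimate slightly differently, but the argument is the same.
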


Next, we relax the rather strong assumptions of Lemma \ref{lem_phi_psi.qq}. If we are given just a proper convex function $\psi \colon \R^{d} \rightarrow (-\infty,+\infty]$, we can still compute the value of the supremum in \eqref{eq_phi_psi.qq}, without explicitly constructing a maximizer of this optimization problem.

\begin{lemma} \label{lem_phi_psi_gen.qq} Let $\psi \colon \Rd \rightarrow (-\infty,+\infty]$ be a proper convex function. Then
\begin{equation} \label{lem_phi_psi_gen_04.qq}
\phi^{\psi} = \int \psi^{\ast}  \, dq.
\end{equation}
\begin{proof} Using probabilistic notation, we rewrite the supremum in \eqref{eq_phi_psi.qq} as
\begin{equation} \label{lem_phi_psi_gen_01.qq}
\phi^{\psi} = \sup \E\big[ \langle Y, Z \rangle - \psi(Y)\big],
\end{equation}
where the supremum in \eqref{lem_phi_psi_gen_01.qq} is taken over all probability spaces such that $Z \sim q$ and $Y$ is an $\Rd$-valued random variable with finite second moment. Replacing $Y$ by $\E[Y \, \vert \, Z]$, we observe that the maximization in \eqref{lem_phi_psi_gen_01.qq} can be restricted to random variables $Y$ which are measurable functions of $Z$, and we obtain
\begin{equation} \label{lem_phi_psi_gen_02.qq}
\phi^{\psi} 
= \sup_{Y \in L^{2}(q;\Rd)} 
\int \Big( \big\langle Y(z), z \big\rangle - \psi\big(Y(z)\big) \Big) \, q(dz),
\end{equation}
where $L^{2}(q;\Rd)$ denotes the space of $\Rd$-valued Borel measurable functions on $\Rd$, which are square-integrable under $q$. Clearly, for any $Y \in L^{2}(q;\Rd)$ we have 
\[
\int \Big( \big\langle Y(z), z \big\rangle - \psi\big(Y(z)\big) \Big) \, q(dz)
\leqslant \int \sup_{y \in \Rd} \big(\langle y,z \rangle - \psi(y)\big) \, q(dz) 
= \int \psi^{\ast}(z) \, q(dz),
\]
which shows the inequality $\phi^{\psi} \leqslant \int \psi^{\ast}  \, dq$. In order to see the reverse inequality, we define the auxiliary problem
\begin{equation} \label{lem_phi_psi_gen_03.qq}
\phi_{\infty}^{\psi} 
\coloneqq \sup_{Y \in L^{\infty}(q;\Rd)} 
\int \Big( \big\langle Y(z), z \big\rangle - \psi\big(Y(z)\big) \Big) \, q(dz),
\end{equation}
where $L^{\infty}(q;\Rd)$ denotes the space of $\Rd$-valued Borel measurable functions on $\Rd$, which are bounded $q$-a.e. Comparing \eqref{lem_phi_psi_gen_02.qq} with \eqref{lem_phi_psi_gen_03.qq}, we obviously have $\phi^{\psi} \geqslant \phi_{\infty}^{\psi}$. Now we claim that
\begin{equation} \label{lem_phi_psi_gen_05.qq}
\phi_{\infty}^{\psi} 
\geqslant \int \sup_{y \in \Rd} \big(\langle y,z \rangle - \psi(y)\big) \, q(dz) 
= \int \psi^{\ast}(z) \, q(dz),
\end{equation}
which will finish the proof of \eqref{lem_phi_psi_gen_04.qq}. To see this, we first write
\[
\phi_{\infty}^{\psi} 
= \lim_{N \rightarrow \infty} \, \sup_{\substack{Y \in L^{\infty}(q;\Rd), \\ \vert Y  \vert \leqslant N}} \, 
\int \Big( \big\langle Y(z), z \big\rangle - \psi\big(Y(z)\big) \Big) \, q(dz).
\]
Using a measurable selection argument, we obtain
\[
\sup_{\substack{Y \in L^{\infty}(q;\Rd), \\ \vert Y  \vert \leqslant N}} \, 
\int \Big( \big\langle Y(z), z \big\rangle - \psi\big(Y(z)\big) \Big) \, q(dz) 
\geqslant
\int \sup_{\substack{y \in \Rd, \\ \vert y  \vert \leqslant N}} \,  \big(\langle y,z \rangle - \psi(y)\big) \, q(dz).
\]
Since $\psi$ is proper we can choose $y_{0} \in \dom \psi \neq \varnothing$. Then for $N$ large enough we have
\[
\sup_{\substack{y \in \Rd, \\ \vert y  \vert \leqslant N}} \,  \big(\langle y,z \rangle - \psi(y)\big) 
\geqslant \langle y_{0},z \rangle - \psi(y_{0}),
\]
with the right-hand side being integrable with respect to $q(dz)$. Hence we can apply the monotone convergence theorem and deduce that
\[
\lim_{N \rightarrow \infty} \int \sup_{\substack{y \in \Rd, \\ \vert y  \vert \leqslant N}} \,  \big(\langle y,z \rangle - \psi(y)\big) \, q(dz)
= \int \sup_{y \in \Rd} \big(\langle y,z \rangle - \psi(y)\big) \, q(dz),
\]
which completes the proof of \eqref{lem_phi_psi_gen_05.qq}.
\end{proof}
\end{lemma}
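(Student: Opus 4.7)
The plan is to prove the identity $\phi^{\psi} = \int \psi^{\ast} \, dq$ as a pair of inequalities. For the upper bound $\phi^{\psi} \leqslant \int \psi^{\ast} \, dq$, I would apply the Fenchel--Young inequality $\langle y, z \rangle - \psi(y) \leqslant \psi^{\ast}(z)$ pointwise in $y, z \in \Rd$. Integrating against an arbitrary coupling $\tilde{\pi} \in \Cpl(p,q)$ yields $\int \langle y, z \rangle \, \tilde{\pi}(dy, dz) - \int \psi \, dp \leqslant \int \psi^{\ast} \, dq$. Taking the supremum over such couplings gives $\MCov(p, q) - \int \psi \, dp \leqslant \int \psi^{\ast} \, dq$, and then taking the supremum over $p \in \PP_{2}(\Rd)$ settles this direction.

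The lower bound $\phi^{\psi} \geqslant \int \psi^{\ast} \, dq$ is the harder inclusion, because one must exhibit admissible $p \in \PP_{2}(\Rd)$ whose objective is arbitrarily close to $\int \psi^{\ast} \, dq$. The naive candidate $p = (\nabla \psi^{\ast})_{\ast} q$ is problematic because $\psi^{\ast}$ need not be differentiable and its gradient need not be square-integrable under $q$. To bypass this, I would first recast $\phi^{\psi}$ probabilistically: writing any admissible $p$ as the law of some $Y$ coupled with $Z \sim q$ and then replacing $Y$ with $\E[Y \, \vert \, Z]$ (using that $\psi$ is convex, so the objective $\E[\langle Y, Z \rangle - \psi(Y)]$ only improves by Jensen's inequality), the supremum may be restricted to $Y$ that are measurable functions of $Z$, giving $\phi^{\psi} = \sup_{Y \in L^{2}(q; \Rd)} \int \big( \langle Y(z), z \rangle - \psi(Y(z)) \big) \, q(dz)$.

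Next, I would introduce the truncated pointwise sup $g_{N}(z) \coloneqq \sup_{\lvert y \rvert \leqslant N} \big( \langle y, z \rangle - \psi(y) \big)$ and, via a measurable selection theorem of Kuratowski--Ryll-Nardzewski type applied to the compact-valued multifunction of $\varepsilon$-maximizers in $\overline{B(0, N)}$, extract a Borel map $Y_{N} \colon \Rd \to \overline{B(0, N)}$ with $\langle Y_{N}(z), z \rangle - \psi(Y_{N}(z)) \geqslant g_{N}(z) - \varepsilon$. Boundedness of $Y_{N}$ gives $Y_{N} \in L^{2}(q; \Rd)$ and $p_{N} \coloneqq (Y_{N})_{\ast} q \in \PP_{2}(\Rd)$; using the valid coupling $(Y_{N}(Z), Z)$ to bound $\MCov(p_{N}, q)$ from below yields $\phi^{\psi} \geqslant \int g_{N} \, dq - \varepsilon$. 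Finally, $g_{N} \uparrow \psi^{\ast}$ pointwise; to apply monotone convergence, I would fix $y_{0} \in \dom \psi$ (nonempty by properness) and note that for $N \geqslant \lvert y_{0} \rvert$, $g_{N}(z) \geqslant \langle y_{0}, z \rangle - \psi(y_{0})$, which is $q$-integrable because $q \in \PP_{2}(\Rd)$ has finite first moments. Hence $\int g_{N} \, dq \to \int \psi^{\ast} \, dq$, and sending $\varepsilon \to 0$ completes the proof. The principal obstacle is this lower bound, where the non-smoothness and possible non-integrability of $\nabla \psi^{\ast}$ force the detour through truncation and measurable selection, and where properness of $\psi$ supplies the integrable minorant that makes the limit rigorous.
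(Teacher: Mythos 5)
Your proposal is correct and follows essentially the same route as the paper: upper bound via Fenchel--Young, lower bound via a probabilistic rewrite restricting to $Y$ measurable in $Z$, truncation to $\vert Y\vert\leqslant N$, measurable selection of near-optimizers, and monotone convergence justified by the integrable affine minorant coming from properness of $\psi$. The only cosmetic difference is that you derive the upper bound by integrating Fenchel--Young against an arbitrary coupling and taking suprema, whereas the paper first reduces to deterministic maps and then applies the same inequality.
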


Lemma \ref{lem_phi_psi_gen.qq} is an important auxiliary result for the proof of Theorem \ref{theorem_new_duality_qq}, more precisely, for the proof of the identity \eqref{p_o_t_d_r_qq_c.eq}. We first show a simpler variant of this identity, where we optimize over the class of convex functions $\psi$ in $\Cqs$ (i.e., which have quadratic growth), as in \eqref{prop_new_duality_first_eq_s.qq} of Proposition \ref{prop_new_duality_first.qq} below. 

\begin{proposition} \label{prop_new_duality_first.qq} The value $\tilde{D}^{q}(\mu,\nu)$ of the dual problem \eqref{eq_dual_qq} is equal to
\begin{equation} \label{prop_new_duality_first_eq_s.qq} 
D_{2}^{q}(\mu,\nu) \coloneqq 
\inf_{\substack{\psi \in \Cqs, \\ \textnormal{$\psi$ convex}}} 
\Big( \int \psi \, d\nu - \int (\psi^{\ast} \star q)^{\ast} \, d \mu \Big).
\end{equation}
\begin{proof} By Proposition \ref{prop_crucial_qq} we have
\[
\tilde{D}^{q}(\mu,\nu) = 
\inf_{\substack{\psi \in \Cqs, \\ \textnormal{$\psi$ convex}}}  
\Big( \int \psi \, d\nu - \int \varphi^{\psi} \, d \mu \Big).
\]
Hence it remains to show that $\varphi^{\psi} = (\psi^{\ast} \star q)^{\ast}$, for every convex function $\psi \in \Cqs$. In order to do this, we will first prove that $(\varphi^{\psi})^{\ast} = \psi^{\ast} \star q$. By definition of the convex conjugate \eqref{eq_def_conv_conj_qq} and of the maximization problem \eqref{eq_phi_psi_x_max.qq}, for $y \in \Rd$, we have
\begin{align}
(\varphi^{\psi})^{\ast}(y) 
&= \sup_{x \in \Rd} \big(\langle x,y \rangle - \varphi^{\psi}(x)\big) \label{prop_new_duality_first_d.qq_i} \\
&= \sup_{x \in \Rd} \sup_{p \in \PP_{2}^{x}(\Rd)} \Big( \MCov(p,q)-\int \psi_{y} \, dp\Big) \label{prop_new_duality_first_d.qq_ii} \\
&= \sup_{p \in \PP_{2}(\Rd)} \Big(\MCov(p,q)- \int \psi_{y} \, dp\Big), \label{prop_new_duality_first_d.qq_iii}
\end{align}
where the function $\psi_{y}$ is defined by $\psi_{y}(z) \coloneqq \psi(z) - \langle y,z \rangle$, for $z \in \Rd$. Now applying Lemma \ref{lem_phi_psi_gen.qq} to the proper convex function $\psi_{y}$ yields
\begin{equation} \label{prop_new_duality_first_d.qq}
(\varphi^{\psi})^{\ast}(y)  
= \phi^{\psi_{y}} 
= \int \psi_{y}^{\ast} \, dq 
= \int \psi^{\ast}(y+z)  \, dq(z) 
= (\psi^{\ast} \star q)(y),
\end{equation}
where for the last equality we recall the definition \eqref{eq_def_star_qq}. To complete the proof, we must justify that $(\varphi^{\psi})^{\ast\ast} = \varphi^{\psi}$, i.e., that the Fenchel--Moreau theorem is applicable. To see this, we first note that the function $x \mapsto \varphi^{\psi}(x)$ is convex and we have the upper bound $\varphi^{\psi} \leqslant \psi < + \infty$. Furthermore, for any $p \in \PP_{2}^{x}(\Rd)$ we have the inequalities
\[
\MCov(p,q) \leqslant 
\tfrac{1}{2} \int \vert y \vert^{2} \, dp(y) + 
\tfrac{1}{2} \int \vert z \vert^{2} \, dq(z) < + \infty
\]
as well as
\[
\int \psi \, dp \geqslant \ell + \tfrac{1}{2} \int \vert y \vert^{2} \, dp(y) > -\infty,
\]
the latter following from the fact that $\psi \in \Cqs$. As a consequence, we get the lower bound 
\[
\varphi^{\psi} \geqslant \ell - \tfrac{1}{2} \int \vert z \vert^{2} \, dq(z) > - \infty.
\]
Altogether, $\varphi^{\psi}$ is a convex function which is finite everywhere on $\R^{d}$, thus it is continuous and we indeed have $(\varphi^{\psi})^{\ast\ast} = \varphi^{\psi}$.
\end{proof}
\end{proposition}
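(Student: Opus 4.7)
The goal is to show $\tilde{D}^{q}(\mu,\nu)=D_{2}^{q}(\mu,\nu)$. Since Proposition \ref{prop_crucial_qq} already lets us restrict to convex $\psi \in \Cqs$, and the two functionals differ only in $\varphi^{\psi}$ versus $-(\psi^{\ast}\star q)^{\ast}$, the entire task reduces to the pointwise identity
\[
\varphi^{\psi}(x) = (\psi^{\ast} \star q)^{\ast}(x), \qquad x \in \Rd,
\]
for every convex $\psi \in \Cqs$. I would prove this by computing $(\varphi^{\psi})^{\ast}$ explicitly via Lemma \ref{lem_phi_psi_gen.qq} and then invoking Fenchel--Moreau.

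The plan is as follows. First, I would rewrite $-\varphi^{\psi}(x)$ as in \eqref{eq_phi_psi_x_max.qq}, namely $-\varphi^{\psi}(x) = \sup_{p \in \PP_{2}^{x}(\Rd)} \big(\MCov(p,q) - \int \psi \, dp\big)$, and then take the convex conjugate in $x$. Because the sup over $x \in \Rd$ and the sup over $p$ with $\bary(p)=x$ collapse into a single sup over all $p \in \PP_{2}(\Rd)$ once the linear term $\langle x,y\rangle = \int \langle y, z\rangle\, p(dz)$ is absorbed into the cost, I obtain
\[
(\varphi^{\psi})^{\ast}(y) = \sup_{p \in \PP_{2}(\Rd)} \Big( \MCov(p,q) - \int \psi_{y}\, dp\Big) = \phi^{\psi_{y}},
\]
where $\psi_{y}(z) \coloneqq \psi(z) - \langle y,z\rangle$. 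Since $\psi_{y}$ is proper convex, Lemma \ref{lem_phi_psi_gen.qq} applies and yields $\phi^{\psi_{y}} = \int \psi_{y}^{\ast} \, dq$. A direct computation from the definition of the conjugate gives $\psi_{y}^{\ast}(z) = \psi^{\ast}(y+z)$, so that $(\varphi^{\psi})^{\ast}(y) = (\psi^{\ast} \star q)(y)$ by \eqref{eq_def_star_qq}.

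Second, I would close the loop by applying Fenchel--Moreau: if $\varphi^{\psi}$ is proper, convex, and lower semicontinuous, then $\varphi^{\psi} = (\varphi^{\psi})^{\ast\ast} = (\psi^{\ast}\star q)^{\ast}$. Convexity is immediate from the infimum representation of $\varphi^{\psi}$: the map $(x,p) \mapsto \int \psi\, dp - \MCov(p,q)$ together with the affine constraint $\bary(p)=x$ produces a jointly convex object whose infimum in $p$ is convex in $x$. For the regularity I only need finiteness of $\varphi^{\psi}$ everywhere, which in turn gives continuity of a convex function on $\Rd$. The upper bound $\varphi^{\psi}(x) \leqslant \psi(x) < + \infty$ follows by taking $p = \delta_{x}$ in \eqref{eq_phi_psi_x_qq}. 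For the lower bound I use that $\psi \in \Cqs$ gives $\psi \geqslant \ell + \tfrac{1}{2}|\cdot|^{2}$, and the trivial bound $\MCov(p,q) \leqslant \tfrac{1}{2}\int |y|^{2}\, dp + \tfrac{1}{2}\int |z|^{2}\, dq$, whence $\int \psi\, dp - \MCov(p,q) \geqslant \ell - \tfrac{1}{2}\int |z|^{2}\, dq$, uniformly in $p$ and $x$.

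The main obstacle I anticipate is the passage from the double supremum over $(x,p)$ to the single supremum over $p \in \PP_{2}(\Rd)$ in the computation of $(\varphi^{\psi})^{\ast}$: one must make sure that the linear term $\langle x,y\rangle$ in the conjugate cleanly absorbs into the cost, that no spurious values arise when $p$ has barycenter equal to some unprescribed $x$, and that the resulting cost $\psi_{y}$ remains proper convex so that Lemma \ref{lem_phi_psi_gen.qq} is applicable. Everything else, including the quadratic upper and lower bounds on $\varphi^{\psi}$ that justify Fenchel--Moreau, is a direct consequence of the definition of $\Cqs$ and of the finite second moment of $q$.
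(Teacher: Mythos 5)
Your proposal is correct and follows essentially the same route as the paper: reduce via Proposition \ref{prop_crucial_qq} to showing $\varphi^{\psi}=(\psi^{\ast}\star q)^{\ast}$, compute $(\varphi^{\psi})^{\ast}(y)=\phi^{\psi_{y}}=(\psi^{\ast}\star q)(y)$ by collapsing the double supremum and invoking Lemma \ref{lem_phi_psi_gen.qq}, and then justify Fenchel--Moreau through the same quadratic upper and lower bounds on $\varphi^{\psi}$. The step you flag as the main obstacle (the absorption of $\langle x,y\rangle$ into $\psi_{y}$) is handled in the paper exactly as you describe, so no gap remains.
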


Before we finally turn to the proof of Theorem \ref{theorem_new_duality_qq} at the end of this section, we formulate a ``relaxed'' version of \eqref{WeakDual_qq}, \eqref{prop_new_duality_first_eq_s.qq}. Similarly as in Lemma \ref{lem_ext_def_dual.qq} we now admit convex functions $\psi \colon \Rd \rightarrow (-\infty,+\infty]$ with $\mu(\dom \psi) = 1$ as dual variables, see \eqref{theorem_new_duality_sec_eq_s.qq} of Proposition \ref{prop_new_duality_rel.qq} below.

\begin{proposition} \label{prop_new_duality_rel.qq} The value $\tilde{D}^{q}(\mu,\nu)$ of the dual problem \eqref{eq_dual_qq} is equal to  
\begin{equation} \label{theorem_new_duality_sec_eq_s.qq}
D_{\textnormal{rel}}^{q}(\mu,\nu) \coloneqq \inf_{\substack{\mu(\dom \psi) = 1, \\ \textnormal{$\psi$ convex}}} \mathcal{E}(\psi),
\end{equation}
where the dual function $\psi \mapsto \mathcal{E}(\psi)$ is defined by
\begin{equation} \label{theorem_new_duality_sec_eq_s_rel_for.qq}
\mathcal{E}(\psi) \coloneqq \int \Big( \int \psi(y) \, \pi_{x}(dy) 
- (\psi^{\ast} \star q)^{\ast}(x) \Big) \, \mu(dx),
\end{equation}
with $\pi$ being an arbitrary fixed element of $\MT(\mu,\nu)$.
\begin{proof} We start with showing that the dual function $\mathcal{E}(\, \cdot \,)$ given in \eqref{theorem_new_duality_sec_eq_s_rel_for.qq} is well-defined for every convex function $\psi \colon \Rd \rightarrow (-\infty,+\infty]$ which is $\mu$-a.s.\ finite. To this end, we first prove the inequality $(\psi^{\ast} \star q)^{\ast} \leqslant \psi$. Since $\psi$ is convex with $\mu(\dom \psi) = 1$, for every $y \in \Rd$, the function 
\begin{equation} \label{eq.def.psiyz.qq}
\Rd \ni z \longmapsto \psi_{y}(z) \coloneqq \psi(z) - \langle y,z \rangle
\end{equation}
is a proper convex function, so that as in the proof of Proposition \ref{prop_new_duality_first.qq} above we can apply Lemma \ref{lem_phi_psi_gen.qq}. In particular, recalling the equation \eqref{prop_new_duality_first_d.qq}, we again have $\psi^{\ast} \star q = (\varphi^{\psi})^{\ast}$. Taking the convex conjugate and using that $\varphi^{\psi} \leqslant \psi$, we obtain 
\begin{equation} \label{eq.in.wd.star.psi.qq}
(\psi^{\ast} \star q)^{\ast} = (\varphi^{\psi})^{\ast\ast} \leqslant \varphi^{\psi} \leqslant \psi, 
\end{equation}
as required. Applying Jensen's inequality as in the proof of Lemma \ref{lem_ext_def_dual.qq}, this implies that the integrand in \eqref{theorem_new_duality_sec_eq_s_rel_for.qq} is $\mu$-a.s.\ non-negative, and hence $\mathcal{E}(\psi)$ is well-defined and takes values in the interval $[0,+\infty]$. 

\smallskip

Now let us turn to the proof of $\tilde{D}^{q}(\mu,\nu) = D_{\textnormal{rel}}^{q}(\mu,\nu)$. Recalling \eqref{theorem_new_duality_sec_eq_s.qq}, \eqref {prop_new_duality_first_eq_s.qq}, and using Proposition \ref{prop_new_duality_first.qq}, we have 
\[
D_{\textnormal{rel}}^{q}(\mu,\nu) \leqslant 
D_{2}^{q}(\mu,\nu) = 
\tilde{D}^{q}(\mu,\nu),
\]
so that we need to show the inequality $\tilde{D}^{q}(\mu,\nu) \leqslant D_{\textnormal{rel}}^{q}(\mu,\nu)$. To this end, let $\pi \in \MT(\mu,\nu)$ and $\psi \colon \Rd \rightarrow (-\infty,+\infty]$ be convex with $\mu(\dom \psi) = 1$. Since $\tilde{D}^{q}(\mu,\nu) = P^{q}(\mu,\nu)$ (recall Proposition \ref{prop_no_duality_gap_first_qq} and \eqref{eq_the_no_duality_gap_qq}), we have to verify the inequality
\[
\int \MCov(\pi_{x},q) \, d\mu(x)
\leqslant  \int \Big( \int \psi(y) \, \pi_{x}(dy)  - (\psi^{\ast} \star q)^{\ast}(x)\Big) \, \mu(dx).
\]
It is sufficient to prove, for $\mu$-a.e.\ $x \in \Rd$, that
\begin{equation} \label{cor_new_duality_sec_01.qq} 
\MCov(\pi_{x},q) 
\leqslant  \int \psi \, d\pi_{x} - (\psi^{\ast} \star q)^{\ast}(x).
\end{equation}
We express the convex conjugate on the right-hand side of \eqref{cor_new_duality_sec_01.qq} as
\[
-(\psi^{\ast} \star q)^{\ast}(x) 
= \inf_{y \in \Rd} \big( (\psi^{\ast} \star q)(y) - \langle x,y \rangle \big) 
= \inf_{y \in \Rd} \Big( \int \psi_{y}^{\ast} \, dq - \langle x,y \rangle \Big);
\]
for the last equality we again relied on the equation \eqref{prop_new_duality_first_d.qq}, and the function $\psi_{y}$ is defined as in \eqref{eq.def.psiyz.qq}. Substituting back into \eqref{cor_new_duality_sec_01.qq} yields
\begin{equation} \label{cor_new_duality_sec_02.qq} 
\MCov(\pi_{x},q) \leqslant \inf_{y \in \Rd} \Big(\int \psi_{y} \, d \pi_{x} + \int \psi_{y}^{\ast} \, dq\Big).
\end{equation}
This is what we need to show to complete the proof of Proposition \ref{prop_new_duality_rel.qq}. Now observe that by the Fenchel--Young inequality we have
\[
\MCov(\pi_{x},q) \leqslant \int f \, d \pi_{x} + \int f^{\ast} \, dq,
\]
for every proper convex function $f \colon \Rd \rightarrow (-\infty,+\infty]$. In particular, for every $y \in \Rd$, it holds that
\[
\MCov(\pi_{x},q) \leqslant \int \psi_{y} \, d \pi_{x} + \int \psi_{y}^{\ast} \, dq,
\]
which implies \eqref{cor_new_duality_sec_02.qq}, as required.
\end{proof}
\end{proposition}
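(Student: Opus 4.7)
The plan is to establish the two inequalities $D_{\textnormal{rel}}^{q}(\mu,\nu) \leq \tilde{D}^{q}(\mu,\nu)$ and $\tilde{D}^{q}(\mu,\nu) \leq D_{\textnormal{rel}}^{q}(\mu,\nu)$ after first checking that the dual function $\psi \mapsto \mathcal{E}(\psi)$ is well-defined on the relaxed class. For the well-definedness, the crucial step is to extend the identity $\psi^{\ast} \star q = (\varphi^{\psi})^{\ast}$ from the proof of Proposition \ref{prop_new_duality_first.qq} to the present situation: for every $y \in \Rd$ the shifted function $\psi_{y}(z) \coloneqq \psi(z) - \langle y,z\rangle$ remains proper convex (since $\mu(\dom \psi) = 1$), so Lemma \ref{lem_phi_psi_gen.qq} applies and yields $(\varphi^{\psi})^{\ast}(y) = \phi^{\psi_{y}} = \int \psi_{y}^{\ast}\,dq = (\psi^{\ast} \star q)(y)$, mirroring \eqref{prop_new_duality_first_d.qq}. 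Taking convex conjugates and using $\varphi^{\psi} \leq \psi$ (evaluate \eqref{eq_phi_psi_x_qq} at $p = \delta_{x}$), we obtain $(\psi^{\ast} \star q)^{\ast} = (\varphi^{\psi})^{\ast\ast} \leq \varphi^{\psi} \leq \psi$. Combined with Jensen's inequality $\int \psi \, d\pi_{x} \geq \psi(\bary(\pi_{x})) = \psi(x)$ valid for $\mu$-a.e.\ $x$, this shows the integrand of \eqref{theorem_new_duality_sec_eq_s_rel_for.qq} is $\mu$-a.s.\ non-negative, placing $\mathcal{E}(\psi)$ in $[0,+\infty]$.

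The inequality $D_{\textnormal{rel}}^{q}(\mu,\nu) \leq \tilde{D}^{q}(\mu,\nu)$ is immediate: any convex $\psi \in \Cqs$ is everywhere finite, so $\mu(\dom \psi) = 1$, and the non-negativity just established justifies a Fubini--Tonelli computation yielding $\mathcal{E}(\psi) = \int \psi\, d\nu - \int (\psi^{\ast} \star q)^{\ast}\, d\mu$. Hence the $D_{\textnormal{rel}}^{q}$-infimum is taken over a superset of the $D_{2}^{q}$-class, and Proposition \ref{prop_new_duality_first.qq} produces $D_{\textnormal{rel}}^{q}(\mu,\nu) \leq D_{2}^{q}(\mu,\nu) = \tilde{D}^{q}(\mu,\nu)$.

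For the reverse inequality I would invoke $\tilde{D}^{q}(\mu,\nu) = P^{q}(\mu,\nu)$ from Proposition \ref{prop_no_duality_gap_first_qq} and reduce matters to the pointwise bound
\[
\MCov(\pi_{x},q) + (\psi^{\ast} \star q)^{\ast}(x) \leq \int \psi \, d\pi_{x},
\qquad \mu\textnormal{-a.e.\ } x,
\]
for an arbitrary $\pi \in \MT(\mu,\nu)$. Rewriting $-(\psi^{\ast} \star q)^{\ast}(x) = \inf_{y \in \Rd} \bigl( \int \psi_{y}^{\ast} \, dq - \langle x,y \rangle \bigr)$ via the identity $\psi^{\ast} \star q = (\varphi^{\psi})^{\ast}$, and using the martingale property $\bary(\pi_{x}) = x$ to absorb the affine term $\langle x,y \rangle$ into $\int \psi_{y}\,d\pi_{x}$, the task collapses to verifying
\[
\MCov(\pi_{x},q) \leq \int \psi_{y}\,d\pi_{x} + \int \psi_{y}^{\ast}\,dq
\qquad \textnormal{for every } y \in \Rd,
\]
which is the Fenchel--Young / Kantorovich duality bound for maximal covariance applied to the proper convex function $\psi_{y}$, as already exploited in \eqref{prop_crucial_04.qq}. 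Integrating the resulting pointwise inequality against $\mu$ gives $\int \MCov(\pi_{x},q)\,\mu(dx) \leq \mathcal{E}(\psi)$, and taking the supremum over $\pi \in \MT(\mu,\nu)$ delivers $P^{q}(\mu,\nu) \leq \mathcal{E}(\psi)$.

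The hardest part, I expect, is the well-definedness step, since we must guarantee that the integrals in \eqref{theorem_new_duality_sec_eq_s_rel_for.qq} avoid indeterminate $+\infty - \infty$ forms for arbitrary convex $\psi$ that may take the value $+\infty$; once the identity $(\psi^{\ast} \star q)^{\ast} \leq \psi$ is secured by the $\psi_{y}$-reduction and Lemma \ref{lem_phi_psi_gen.qq}, both inequalities follow by adaptations of the Fenchel--Young argument used for Proposition \ref{prop_new_duality_first.qq}.
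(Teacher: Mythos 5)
Your proposal is correct and follows essentially the same route as the paper's proof: the identity $\psi^{\ast}\star q=(\varphi^{\psi})^{\ast}$ via Lemma \ref{lem_phi_psi_gen.qq} applied to the tilted functions $\psi_{y}$, the bound $(\psi^{\ast}\star q)^{\ast}\leqslant\varphi^{\psi}\leqslant\psi$ plus Jensen for well-definedness, the inclusion of dual classes for the easy inequality, and the pointwise Fenchel--Young bound $\MCov(\pi_{x},q)\leqslant\int\psi_{y}\,d\pi_{x}+\int\psi_{y}^{\ast}\,dq$ combined with the martingale property for the reverse one. The only cosmetic difference is that you spell out the Fubini--Tonelli step identifying $\mathcal{E}(\psi)$ with $\tilde{\mathcal{D}}(\psi)$ for $\psi\in\Cqs$, which the paper leaves implicit.
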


Finally, we give the proof of Theorem \ref{theorem_new_duality_qq}, where we optimize over all $\nu$-integrable convex functions $\psi$ as in \eqref{WeakDual_qq}, \eqref{p_o_t_d_r_qq_c.eq}.

\begin{proof}[Proof of Theorem \ref{theorem_new_duality_qq}] We first show that the integral
\begin{equation} \label{int.ex.qq}
\int (\psi^{\ast} \star q)^{\ast} \, d \mu 
\end{equation}
appearing on the right-hand side of \eqref{WeakDual_qq}, \eqref{p_o_t_d_r_qq_c.eq} is well-defined for every convex function $\psi \in L^{1}(\nu)$. To see this, recall from \eqref{eq.in.wd.star.psi.qq} above that $(\psi^{\ast} \star q)^{\ast} \leqslant \psi$. On the other hand, since $\mu \lc \nu$ and $\psi \in L^{1}(\nu)$ is convex, the integral of $\psi$ with respect to $\mu$ exists. We conclude that also the integral \eqref{int.ex.qq} exists.

\smallskip

Now let us turn to the actual proof of Theorem \ref{theorem_new_duality_qq}. Recalling the beginning of Subsection \ref{p_o_t_d_r_qq_c}, we have to prove the equality \eqref{p_o_t_d_r_qq_c.eq}. But this assertion follows immediately from Proposition \ref{prop_new_duality_rel.qq} and Proposition \ref{prop_new_duality_first.qq} by a ``sandwich argument''. Indeed, we have
\begin{equation} \label{eq_sw_arg.qq}
\tilde{D}^{q}(\mu,\nu) = 
D_{\textnormal{rel}}^{q}(\mu,\nu) \leqslant 
D^{q}(\mu,\nu) \leqslant 
D_{2}^{q}(\mu,\nu) 
= \tilde{D}^{q}(\mu,\nu).
\end{equation}
The inequalities in \eqref{eq_sw_arg.qq} are due to the inclusions
\[
\{ \psi \in \Cqs\colon \textnormal{$\psi$ convex} \} \subseteq \{ \psi \in L^{1}(\nu) \colon \textnormal{$\psi$ convex} \} \subseteq \{ \textnormal{$\psi$ convex} \colon \mu(\dom \psi) = 1  \};
\]
the equalities on the left-hand side and on the right-hand side of \eqref{eq_sw_arg.qq} are justified by Proposition \ref{prop_new_duality_rel.qq} and Proposition \ref{prop_new_duality_first.qq}, respectively.

\smallskip

Finally, we recall that existence and uniqueness of the optimizer $\hat{\pi} \in \MT(\mu,\nu)$ as well as finiteness of $P^{q}(\mu,\nu)$ have already been shown in Proposition \ref{prop_no_duality_gap_first_qq}.
\end{proof}

%%%%%%%%%%%%%%%%%%%%%%%%%%%%%%%%%%%%%%%%%%%%%%%%%%%%%%%%%%%%%%%%%%%%%%%%%%%%%%%%%%%%%%%%%%%%%%%%%%%%
\section{Existence and properties of \texorpdfstring{$q$}{q}-Bass martingales} \label{chap_qbms}
%%%%%%%%%%%%%%%%%%%%%%%%%%%%%%%%%%%%%%%%%%%%%%%%%%%%%%%%%%%%%%%%%%%%%%%%%%%%%%%%%%%%%%%%%%%%%%%%%%%%

Throughout this chapter we fix $\mu, q \in \PP_{2}(\Rd)$ and assume that $q$ does not give mass to small sets. Our goal is to prove our second main result, Theorem \ref{qq_smr_qq}. Its proof is based on the work \cite{BVBST23}, where the Gaussian case $q = \gamma$ is studied in detail. First, we need some auxiliary results.

\begin{lemma} \label{lem_phipsi_con_qq} Let $\psi \colon \Rd \rightarrow (-\infty,+\infty]$ be a proper, lower semicontinuous convex function.
\begin{enumerate}[label=(\roman*)] 
\item \label{lem_phipsi_con_qq_i} The function $x \mapsto \varphi^{\psi}(x)$ defined in \eqref{eq_phi_psi_x_qq} is convex on $\dom\psi$.
\item \label{lem_phipsi_con_qq_ii} If $\varphi^{\psi}(x) > - \infty$, for some $x \in \operatorname{int} (\dom \psi)$, then $\varphi^{\psi} > - \infty$ on $\operatorname{int}(\dom \psi)$.
\end{enumerate}
\begin{proof} \ref{lem_phipsi_con_qq_i} To see that the function $x \mapsto \varphi^{\psi}(x)$ is convex on $\dom \psi$, we let $x \coloneqq \lambda x_{1}+(1-\lambda)x_{2}$, for $x_{1},x_{2} \in \dom \psi$ and $\lambda \in(0,1)$. If $\varphi^{\psi}(x_{1}) = -\infty$, then there are $p_{x_{1}}^{(n)} \in \PP_{2}^{x_{1}}(\Rd)$ with 
\[
\int \psi \, dp_{x_{1}}^{(n)} - \MCov(p_{x_{1}}^{(n)},q) \rightarrow -\infty.
\]
We observe that $p_{x}^{(n)} \coloneqq \lambda p_{x_{1}}^{(n)} + (1-\lambda)\delta_{x_{2}}\in  \PP_{2}^{x}(\Rd)$, and consequently
\begin{align*}
\varphi^{\psi}(x)
&\leqslant \lambda \int \psi \, dp_{x_{1}}^{(n)} + (1-\lambda) \psi(x_{2}) - \MCov(p_{x}^{(n)},q) \\
&\leqslant \lambda \Big( \int \psi \, dp_{x_{1}}^{(n)}  - \MCov(p_{x_{1}}^{(n)},q) \Big) 
+(1-\lambda)\psi(x_{2}),
\end{align*}
where we have used the convexity of $\PP_{2}(\Rd) \ni p \longmapsto - \MCov(p,q)$ in the last inequality. We conclude that  $\varphi^{\psi}(x) = -\infty$. The case $\varphi^{\psi}(x_{2}) = -\infty$ is treated similarly. If, on the other hand, both $\varphi^{\psi}(x_{1}) > - \infty$ and $\varphi^{\psi}(x_{2}) > -\infty$, then 
\[
\varphi^{\psi}(x)\leqslant \lambda \varphi^{\psi}(x_{1}) + (1-\lambda)\varphi^{\psi}(x_{2})
\]
follows by standard arguments. Altogether we see that $\varphi^{\psi}$ is convex on $\dom \psi$. 

\smallskip

\noindent \ref{lem_phipsi_con_qq_ii} If $\varphi^{\psi}(x) > - \infty$ for one $x \in \operatorname{int} (\dom \psi)$, then $\varphi^{\psi}(\tilde{x}) > - \infty$ for all $\tilde{x} \in \operatorname{int}(\dom \psi)$, as can be seen directly by convexity. 
\end{proof}
\end{lemma}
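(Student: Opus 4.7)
The plan is to split along the two assertions and exploit two basic structural properties: affine-linearity of $p \mapsto \int \psi \, dp$ and concavity of $p \mapsto \MCov(p,q)$.

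For part \ref{lem_phipsi_con_qq_i}, given $x_{1},x_{2} \in \dom \psi$ and $\lambda \in (0,1)$, I would set $x \coloneqq \lambda x_{1} + (1-\lambda) x_{2}$ (which lies in $\dom \psi$ by convexity of $\psi$) and show $\varphi^{\psi}(x) \leqslant \lambda \varphi^{\psi}(x_{1}) + (1-\lambda) \varphi^{\psi}(x_{2})$ by direct plan-construction: from candidates $p_{x_{1}} \in \PP_{2}^{x_{1}}(\Rd)$ and $p_{x_{2}} \in \PP_{2}^{x_{2}}(\Rd)$, I form the mixture $p_{x} \coloneqq \lambda p_{x_{1}} + (1-\lambda) p_{x_{2}}$, which clearly has barycenter $x$ and finite second moment. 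The function $p \mapsto \int \psi \, dp$ is linear, while $p \mapsto \MCov(p,q)$ is concave: indeed, if $\tilde{\pi}_{i} \in \Cpl(p_{x_{i}},q)$ for $i=1,2$, then $\lambda \tilde{\pi}_{1} + (1-\lambda) \tilde{\pi}_{2} \in \Cpl(p_{x},q)$ because $q$ is fixed in the second marginal, and plugging this into the definition of $\MCov$ gives superadditivity. Combining these two observations yields the desired convex-combination inequality for the objective in \eqref{eq_phi_psi_x_qq}, and passing to the infimum over $p_{x_{1}}, p_{x_{2}}$ (or, if either $\varphi^{\psi}(x_{i}) = -\infty$, replacing the corresponding marginal by $\delta_{x_{i}}$ and running a minimizing sequence in the other slot) yields $\varphi^{\psi}(x) \leqslant \lambda \varphi^{\psi}(x_{1}) + (1-\lambda)\varphi^{\psi}(x_{2})$ with the convention that $-\infty$ propagates correctly.

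For part \ref{lem_phipsi_con_qq_ii}, I would argue by contradiction using part \ref{lem_phipsi_con_qq_i} together with the trivial upper bound $\varphi^{\psi} \leqslant \psi$ (obtained by plugging $p = \delta_{x}$ into \eqref{eq_phi_psi_x_qq}). Let $x \in \operatorname{int}(\dom \psi)$ with $\varphi^{\psi}(x) > -\infty$, and let $\tilde{x} \in \operatorname{int}(\dom \psi)$. Since $x$ lies in the interior, I can prolong the segment from $\tilde{x}$ to $x$ slightly beyond $x$ to obtain a point $x_{1} \in \operatorname{int}(\dom \psi)$ such that $x = \lambda \tilde{x} + (1-\lambda) x_{1}$ for some $\lambda \in (0,1)$. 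By part \ref{lem_phipsi_con_qq_i},
\[
\varphi^{\psi}(x) \leqslant \lambda \varphi^{\psi}(\tilde{x}) + (1-\lambda) \varphi^{\psi}(x_{1}),
\]
and the upper bound gives $\varphi^{\psi}(x_{1}) \leqslant \psi(x_{1}) < +\infty$. If $\varphi^{\psi}(\tilde{x}) = -\infty$, the right-hand side equals $-\infty$, contradicting $\varphi^{\psi}(x) > -\infty$.

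I do not anticipate a serious obstacle: the two inputs (affine dependence on $p$ of the $\psi$-integral and concavity of $\MCov(\cdot,q)$) are straightforward, and the interior-domain argument in part \ref{lem_phipsi_con_qq_ii} is standard convex-analysis boilerplate. The only point requiring care is the bookkeeping of $\pm \infty$ values in part \ref{lem_phipsi_con_qq_i}, which is why I would handle the case $\varphi^{\psi}(x_{i}) = -\infty$ separately using a Dirac surrogate at the other endpoint, exactly in order to ensure that the mixture $p_{x}^{(n)}$ still lies in $\PP_{2}^{x}(\Rd)$.
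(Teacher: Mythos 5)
Your proposal is correct and follows essentially the same route as the paper: form the convex combination of measures at the two endpoints, use linearity of the $\psi$-integral and concavity of $\MCov(\,\cdot\,,q)$ under mixtures, handle the case $\varphi^{\psi}(x_i) = -\infty$ with a Dirac surrogate at the finite endpoint (needed both so that the mixture stays in $\PP_{2}^{x}(\Rd)$ and so that the $\psi$-integral at that slot is the finite number $\psi(x_j)$), and derive part (ii) from part (i) together with the bound $\varphi^{\psi} \leqslant \psi$ and the interior-segment prolongation. You supply somewhat more detail than the paper, which merely invokes convexity of $p \mapsto -\MCov(p,q)$ and dismisses part (ii) as ``directly by convexity'', but there is no substantive difference in approach.
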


Our next result is the analogue of Lemma \ref{lem_phi_psi_gen.qq}. But now, instead of maximizing over all $p \in \PP_{2}(\Rd)$ as in \eqref{eq_phi_psi.qq}, we maximize over all $p \in \PP_{2}^{x}(\Rd)$, with a fixed barycenter $x \in \Rd$, as in \eqref{eq_phi_psi_x_max.qq}.

\begin{lemma} \label{lem_phi_psi_x_gen_qq} Let $\psi \colon \Rd \rightarrow (-\infty,+\infty]$ be a lower semicontinuous convex function and assume that $\varphi^{\psi}(x) > - \infty$ for some $x \in \operatorname{int} (\dom \psi)$. Then we have the duality formula
\begin{equation} \label{eq_prop_phi_psi_x_gen_df.qq}
\varphi^{\psi}(x) = \sup_{y \in \Rd} \Big(\langle x,y \rangle - \int \psi^{\ast}(y+z)  \, dq(z)\Big).
\end{equation}
\begin{proof} We first show that $(\varphi^{\psi})^{\ast} = \psi^{\ast} \star q$. To see this, we recall equations \eqref{prop_new_duality_first_d.qq_i} -- \eqref{prop_new_duality_first_d.qq} of Proposition \ref{prop_new_duality_first.qq} and note that these are valid for any proper, lower semicontinuous convex function $\psi$. Therefore, for each $y \in \Rd$, we have
\begin{equation} \label{eq_phi_psi_x_gen_01_i_qq_a}
(\varphi^{\psi})^{\ast}(y) 
= \sup_{p \in \PP_{2}(\Rd)} \Big(\MCov(p,q)- \int \psi_{y} \, dp\Big)
= \int \psi^{\ast}(y+z)  \, dq(z),
\end{equation}
where the function $\psi_{y}$ is defined by $\psi_{y}(z) \coloneqq \psi(z) - \langle y,z \rangle$, for $z\in \Rd$. 

\smallskip

Since $\varphi^{\psi}(x) > - \infty$ for some $x \in \operatorname{int} (\dom \psi)$ and $\varphi^{\psi}$ is convex by Lemma \ref{lem_phipsi_con_qq}, the function $\varphi^{\psi}$ is finite and thus also continuous in a neighbourhood of $x$. Hence we can apply the variant \cite[Proposition 13.44]{BC17} of the Fenchel--Moreau theorem and deduce from \eqref{eq_phi_psi_x_gen_01_i_qq_a} that
\begin{equation} \label{eq_phi_psi_x_gen_01_ii.qq}
\varphi^{\psi}(x) 
= (\varphi^{\psi})^{\ast\ast}(x) 
= \sup_{y \in \Rd} \Big(\langle x,y \rangle - \int \psi^{\ast}(y+z)  \, dq(z)\Big),
\end{equation}
which proves the duality formula \eqref{eq_prop_phi_psi_x_gen_df.qq}.
\end{proof}
\end{lemma}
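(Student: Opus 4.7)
The plan is to reduce the lemma to a pointwise Fenchel--Moreau (biconjugation) identity for the function $x \mapsto \varphi^{\psi}(x)$: first compute $(\varphi^{\psi})^{\ast}$ in closed form, then justify that $\varphi^{\psi} = (\varphi^{\psi})^{\ast\ast}$ at the specific point $x$ given in the hypothesis.

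For the first step, I would mimic the calculation already carried out at the beginning of the proof of Proposition \ref{prop_new_duality_first.qq}. Starting from the definition \eqref{eq_phi_psi_x_qq} and unfolding the two suprema, one obtains for every $y \in \Rd$
\[
(\varphi^{\psi})^{\ast}(y)
= \sup_{x \in \Rd} \sup_{p \in \PP_{2}^{x}(\Rd)} \Big( \MCov(p,q) - \int \psi_{y} \, dp \Big)
= \sup_{p \in \PP_{2}(\Rd)} \Big( \MCov(p,q) - \int \psi_{y} \, dp \Big),
\]
where $\psi_{y}(z) \coloneqq \psi(z) - \langle y,z\rangle$. Since $\dom \psi_{y} = \dom \psi \neq \varnothing$, the function $\psi_{y}$ is proper convex, so Lemma \ref{lem_phi_psi_gen.qq} (which only requires properness, not lower semicontinuity) applies and yields $(\varphi^{\psi})^{\ast}(y) = \int \psi_{y}^{\ast} \, dq = \int \psi^{\ast}(y+z)\, q(dz)$.

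The second step is to justify invoking Fenchel--Moreau at $x$. By Lemma \ref{lem_phipsi_con_qq}\ref{lem_phipsi_con_qq_i} the function $\varphi^{\psi}$ is convex on $\dom \psi$; by part \ref{lem_phipsi_con_qq_ii} together with the standing hypothesis $\varphi^{\psi}(x) > -\infty$ it is $> -\infty$ throughout $\operatorname{int}(\dom \psi)$; and the trivial upper bound obtained by testing \eqref{eq_phi_psi_x_qq} with $p = \delta_{x}$ shows $\varphi^{\psi} < +\infty$ on $\dom \psi$. Consequently $\varphi^{\psi}$ is finite on the open convex set $\operatorname{int}(\dom \psi)$, hence automatically continuous there. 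Invoking a version of the Fenchel--Moreau theorem valid under continuity at the point of interest then yields $\varphi^{\psi}(x) = (\varphi^{\psi})^{\ast\ast}(x)$, which combined with the first step is exactly the duality formula \eqref{eq_prop_phi_psi_x_gen_df.qq}.

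The step I expect to be the main obstacle is precisely this biconjugation at $x$: under the weak hypotheses of the lemma, $\varphi^{\psi}$ need not be globally lower semicontinuous on $\Rd$, nor even finite off $\dom \psi$, so the textbook form of Fenchel--Moreau does not apply as stated. The fix is the local viewpoint above, which exploits continuity of $\varphi^{\psi}$ at the single point $x$, a hypothesis handled by standard pointwise variants of biconjugation theorems in convex analysis.
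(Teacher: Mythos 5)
Your proof is correct and follows essentially the same route as the paper's. Both compute $(\varphi^{\psi})^{\ast} = \psi^{\ast}\star q$ by unfolding the two suprema and applying Lemma~\ref{lem_phi_psi_gen.qq} to the proper convex function $\psi_y$, and both then invoke Lemma~\ref{lem_phipsi_con_qq} to get convexity and finiteness of $\varphi^{\psi}$ near $x$, whence continuity on $\operatorname{int}(\dom\psi)$, in order to justify the pointwise biconjugation $\varphi^{\psi}(x)=(\varphi^{\psi})^{\ast\ast}(x)$; the paper cites the specific pointwise variant of Fenchel--Moreau as \cite[Proposition 13.44]{BC17}, while you appeal to it generically, but the logic is identical.
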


The duality formula \eqref{eq_prop_phi_psi_x_gen_df.qq} established in Lemma \ref{lem_phi_psi_x_gen_qq} enables us to generalize Lemma \ref{lem_phi_psi.qq} and identify the structure of the optimizer in the minimization problem \eqref{eq_phi_psi_x_qq}.

\begin{lemma} \label{lem_phi_psi_x_gen_opt_qq} Let $\psi \colon \Rd \rightarrow (-\infty,+\infty]$ be a lower semicontinuous convex function.
\begin{enumerate}[label=(\roman*)] 
\item \label{lem_phi_psi_x_gen_opt_qq_i} If there are $x \in \Rd$ and $\hat{y}(x) \in \Rd$ such that
\begin{equation} \label{lem_phi_psi_x_gen_opt_qq_i_prob}
\hat{p}_{x} \coloneqq \nabla \psi^{\ast}\big(\hat{y}(x) + \, \cdot \, \big)(q) \in \PP_{2}^{x}(\Rd),
\end{equation}
then this is actually the unique optimizer of the infimum in \eqref{eq_phi_psi_x_qq}. 
\item \label{lem_phi_psi_x_gen_opt_qq_ii} If additionally $\varphi^{\psi}(x) > - \infty$ and $x \in \operatorname{int} (\dom \psi)$, then $\hat{y}(x)$ is the optimizer of the supremum in \eqref{eq_prop_phi_psi_x_gen_df.qq}.
\end{enumerate}
\begin{proof} \ref{lem_phi_psi_x_gen_opt_qq_i} For fixed $y \in \Rd$ we define the function $\psi_{y}$ by $\psi_{y}(z) \coloneqq \psi(z) - \langle y,z \rangle$, for $z \in \Rd$, so that $\psi_{y}^{\ast}(z) = \psi^{\ast}(y+z)$. With this notation, we can express the probability measure defined in \eqref{lem_phi_psi_x_gen_opt_qq_i_prob} as $\hat{p}_{x} = (\nabla \psi_{\hat{y}(x)}^{\ast})(q)$. We denote by $T_{q}^{p_{x}}$ the Brenier map from $q$ to $p_{x} \in \PP_{2}^{x}(\Rd)$ and note that $T_{q}^{\hat{p}_{x}} = \nabla \psi_{\hat{y}(x)}^{\ast}$ by Brenier's theorem. For arbitrary $p_{x} \in \PP_{2}^{x}(\Rd)$ we compute
\begin{align*}
\int \psi \, dp_{x} - \MCov(p_{x},q)
&= \int \Big( \psi_{\hat{y}(x)}\big(T_{q}^{p_{x}}(z)\big)
+ \big\langle T_{q}^{p_{x}}(z), \hat{y}(x) \big\rangle
- \big\langle T_{q}^{p_{x}}(z),z\big\rangle 
 \Big) \, dq(z) \\
&\geqslant 
\big\langle x, \hat{y}(x) \big\rangle 
+ \int \inf_{y \in \Rd} \big( 
\psi_{\hat{y}(x)}(y) - \langle y,z\rangle \big) \, dq(z)  \\
&= \big\langle x, \hat{y}(x) \big\rangle 
- \int \psi^{\ast}(\hat{y}(x) + z) \, dq(z)  \\
&= \int \psi \, d\hat{p}_{x} - \MCov(\hat{p}_{x},q),
\end{align*}
with equality if and only if $T_{q}^{p_{x}}(z) = T_{q}^{\hat{p}_{x}}(z)$, for $q$-a.e.\ $z \in \Rd$. This in turn is the case if and only if $p_{x} = \hat{p}_{x}$. We conclude that $\hat{p}_{x}$ is the unique optimizer of the infimum in \eqref{eq_phi_psi_x_qq}, i.e.,
\begin{equation} \label{eq_phi_psi_x_gen_opt_i_qq}
\varphi^{\psi}(x) 
= \int \psi \, d\hat{p}_{x} - \MCov(\hat{p}_{x},q) 
= \big\langle x,\hat{y}(x) \big\rangle - \int \psi^{\ast}\big(\hat{y}(x) + z\big) \, dq(z).
\end{equation}

\smallskip

\noindent \ref{lem_phi_psi_x_gen_opt_qq_ii} If $\varphi^{\psi}(x) > - \infty$ and $x \in \operatorname{int} (\dom \psi)$, then by Lemma \ref{lem_phi_psi_x_gen_qq} we have that
\[
\varphi^{\psi}(x) = \sup_{y \in \Rd} \Big(\langle x,y \rangle - \int \psi^{\ast}(y+z)  \, dq(z)\Big),
\]
which in light of \eqref{eq_phi_psi_x_gen_opt_i_qq} implies that $\hat{y}(x)$ is the optimizer of the supremum in \eqref{eq_prop_phi_psi_x_gen_df.qq}.
\end{proof}
\end{lemma}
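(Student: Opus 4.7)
The plan is to exploit the interplay between the Fenchel--Young inequality and Brenier's theorem. The key reformulation is to set $\psi_{y}(z) \coloneqq \psi(z) - \langle y,z \rangle$, so that $\psi_{y}^{\ast}(z) = \psi^{\ast}(y+z)$ and the candidate measure in \eqref{lem_phi_psi_x_gen_opt_qq_i_prob} becomes $\hat{p}_{x} = \nabla \psi_{\hat{y}(x)}^{\ast}(q)$. Since $\psi_{\hat{y}(x)}^{\ast}$ is convex and $q$ does not give mass to small sets, Brenier's theorem identifies $\nabla \psi_{\hat{y}(x)}^{\ast}$ as the $q$-a.e.\ unique Brenier map from $q$ onto $\hat{p}_{x}$, so that $\MCov(\hat{p}_{x},q) = \int \langle \nabla \psi_{\hat{y}(x)}^{\ast}(z), z \rangle \, q(dz)$.

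For part \ref{lem_phi_psi_x_gen_opt_qq_i}, I would first derive a universal lower bound valid for every $p \in \PP_{2}^{x}(\Rd)$ and every $y \in \Rd$. Since $\bary(p) = x$, we have $\int \psi \, dp = \int \psi_{y} \, dp + \langle x, y \rangle$. Integrating the Fenchel--Young inequality $\psi_{y}(w) + \psi_{y}^{\ast}(z) \geqslant \langle w,z \rangle$ against any coupling of $p$ and $q$ and taking the supremum gives $\int \psi_{y} \, dp + \int \psi_{y}^{\ast} \, dq \geqslant \MCov(p,q)$, hence
\[
\int \psi \, dp - \MCov(p,q) \geqslant \langle x,y \rangle - \int \psi^{\ast}(y+z) \, q(dz).
\]
Attainment at $y = \hat{y}(x)$ and $p = \hat{p}_{x}$ then follows from the Fenchel identity $\psi_{\hat{y}(x)}(\nabla \psi_{\hat{y}(x)}^{\ast}(z)) + \psi_{\hat{y}(x)}^{\ast}(z) = \langle \nabla \psi_{\hat{y}(x)}^{\ast}(z), z \rangle$, valid at every point of differentiability of $\psi_{\hat{y}(x)}^{\ast}$ and thus $q$-a.e.

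The main obstacle I anticipate is uniqueness. Given an arbitrary competitor $p \in \PP_{2}^{x}(\Rd)$ with Brenier map $T_{q}^{p}$ from $q$ to $p$, I would isolate the gap as
\[
\int \psi \, dp - \MCov(p,q) - \varphi^{\psi}(x) = \int \big( \psi_{\hat{y}(x)}(T_{q}^{p}(z)) + \psi_{\hat{y}(x)}^{\ast}(z) - \langle T_{q}^{p}(z), z \rangle \big) \, q(dz).
\]
The integrand is non-negative by Fenchel--Young and vanishes $q$-a.e.\ if and only if $T_{q}^{p}(z) \in \partial \psi_{\hat{y}(x)}^{\ast}(z)$, which by $q$-a.e.\ differentiability of $\psi_{\hat{y}(x)}^{\ast}$ forces $T_{q}^{p} = \nabla \psi_{\hat{y}(x)}^{\ast} = T_{q}^{\hat{p}_{x}}$, $q$-a.e. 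Uniqueness of the Brenier map then yields $p = \hat{p}_{x}$.

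For part \ref{lem_phi_psi_x_gen_opt_qq_ii}, the additional hypotheses $\varphi^{\psi}(x) > -\infty$ and $x \in \operatorname{int}(\dom \psi)$ are exactly those required to invoke Lemma \ref{lem_phi_psi_x_gen_qq}, which identifies $\varphi^{\psi}(x)$ with the supremum in \eqref{eq_prop_phi_psi_x_gen_df.qq}. Since part \ref{lem_phi_psi_x_gen_opt_qq_i} already shows $\varphi^{\psi}(x) = \langle x, \hat{y}(x) \rangle - \int \psi^{\ast}(\hat{y}(x) + z) \, q(dz)$, the value of the supremum is attained at $y = \hat{y}(x)$, completing the proof.
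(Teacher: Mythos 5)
Your proof is correct and follows essentially the same route as the paper. Both arguments rest on the $\psi_{y}$-reparametrization with $\psi_{y}^{\ast}(z) = \psi^{\ast}(y+z)$, the Brenier-map representation $T_{q}^{\hat{p}_{x}} = \nabla \psi_{\hat{y}(x)}^{\ast}$, and pointwise Fenchel--Young to lower-bound the objective and to force $T_{q}^{p} = T_{q}^{\hat{p}_{x}}$ $q$-a.e.\ in the equality case; your separation into a universal lower bound (via coupling-level Fenchel--Young rather than the Brenier map), attainment, and uniqueness is merely a slightly more modular packaging of the paper's single chain of (in)equalities.
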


%%%%%%%%%%%%%%%%%%%%%%%%%%%%%%%%%%%%%%%%%%%%%%%%%%%%%%%%%%%%%%%%%%%%%%%%%%%%%%%%%%%%%%%%%%%%%%%%%%%%
\subsection{Proof of Theorem \texorpdfstring{\ref{qq_smr_qq}}{1.5}} \label{qq_smr_qq_sec_15}
%%%%%%%%%%%%%%%%%%%%%%%%%%%%%%%%%%%%%%%%%%%%%%%%%%%%%%%%%%%%%%%%%%%%%%%%%%%%%%%%%%%%%%%%%%%%%%%%%%%%

\begin{definition} \label{def_dual_opt_qq} We say that a lower semicontinuous convex function $\hat{\psi} \colon \Rd \rightarrow (-\infty,+\infty]$ satisfying $\mu(\operatorname{int}(\dom\hat{\psi}))=1$ is an optimizer of the dual problem \eqref{WeakDual_qq} if $D^{q}(\mu,\nu) = \mathcal{E}(\hat{\psi})$, for the dual function $\mathcal{E}(\, \cdot \,)$ as defined in \eqref{theorem_new_duality_sec_eq_s_rel_for.qq}. In short, we say that $\hat{\psi}$ is a dual optimizer.
\end{definition}

\begin{proof}[Proof of Theorem \ref{qq_smr_qq}] \ref{qq_smr_qq_i} Let $\hat{v} \colon \Rd \rightarrow \R$ be $q$-Bass martingale generating with initial marginal $\mu$. Then, according to Definition \ref{def_psi_v_qbg}, the function $\hat{v}$ is convex, its convex conjugate $\hat{\psi} = \hat{v}^{\ast}$ satisfies $\mu(\operatorname{int}(\dom\hat{\psi})) = 1$, and the finite-valued function $\hat{v} \star q$ is strictly convex with gradient 
\begin{equation} \label{qq_smr_qq_i_bwi_grad}
\nabla(\hat{v} \star q) = (\nabla \hat{v}) \star q.
\end{equation}
By \cite[Theorem 26.5]{Roc70}, it follows that the convex conjugate $(\hat{v} \star q)^{\ast}$ is differentiable and strictly convex on $\operatorname{int}(\dom(\hat{v} \star q)^{\ast})$. Furthermore, the function 
\[
\nabla(\hat{v} \star q) \colon \Rd \longrightarrow \operatorname{int} (\dom (\hat{v} \star q)^{\ast})
\]
is a bijection with inverse
\begin{equation} \label{qq_smr_qq_i_bwi}
(\nabla \hat{v} \star q)^{-1} = \nabla(\hat{v} \star q)^{\ast}.
\end{equation}

\smallskip

Next, we show that $\mu(\operatorname{int} (\dom (\hat{v} \star q)^{\ast})) = 1$. To this end, we will prove the inclusion 
\[
\operatorname{int} (\dom \hat{\psi}) \subseteq \operatorname{int} (\dom (\hat{v} \star q)^{\ast}). 
\]
Indeed, for $x \in \dom \hat{\psi}$, we have
\begin{align*}
(\hat{v} \star q)^{\ast}(x)
&= \sup_{y \in \Rd} \Big( \langle x,y \rangle - \int \hat{v}(y+z) \, dq(z)\Big) 
\leqslant \int \sup_{y \in \Rd} \Big( \langle x,y \rangle -\hat{v}(y+z) \Big) \, dq(z) \\
&= \int \sup_{y \in \Rd} \Big( \langle x,y-z \rangle - \hat{v}(y) \Big) \, dq(z) 
= \int \Big( \hat{v}^{\ast}(x) - \langle x,z \rangle \Big) \, dq(z) \\
&\leqslant \hat{v}^{\ast}(x) + \vert x \vert \
\sqrt{\int \vert z \vert^{2} \, d q(z)} < + \infty,
\end{align*}
so that $\dom \hat{\psi} \subseteq \dom (\hat{v} \star q)^{\ast}$.
 
\smallskip

Now we are ready to show that the pair $(\hat{v},\hat{\alpha}^{\hat{v}})$ is a $q$-Bass martingale with initial marginal $\mu$ and with terminal marginal $\nu^{\hat{v}}$, i.e.,
\begin{equation} \label{qq_smr_qq_i_dpqbm}
(\nabla \hat{v} \star q)(\hat{\alpha}^{\hat{v}}) = \mu
\qquad \textnormal{ and } \qquad 
\nabla \hat{v}(\hat{\alpha}^{\hat{v}} \ast q) = \nu^{\hat{v}}.
\end{equation}
According to Definition \ref{def_psi_v_qbg}, we have $\hat{\alpha}^{\hat{v}} = \nabla(\hat{v} \star q)^{\ast}(\mu)$ and $\nu^{\hat{v}} = \nabla \hat{v}(\hat{\alpha}^{\hat{v}} \ast q)$. Together with \eqref{qq_smr_qq_i_bwi}, we conclude the relations \eqref{qq_smr_qq_i_dpqbm}.

\medskip

\noindent \ref{qq_smr_qq_ii} Recalling the second equality in \eqref{eq_phi_psi_x_gen_01_ii.qq}, we have
\begin{equation} \label{eq_du_op_b_m_psi_v_a_02.qq}
\varphi^{\hat{\psi}}(x) \geqslant 
(\varphi^{\hat{\psi}})^{\ast\ast}(x) 
= \sup_{y \in \Rd} \Big(\langle x,y \rangle - \int \hat{v}(y+z)  \, dq(z)\Big).
\end{equation}
As the right-hand side of \eqref{eq_du_op_b_m_psi_v_a_02.qq} is greater than $-\infty$, for every $x \in \Rd$, we can apply Lemma \ref{lem_phi_psi_x_gen_qq}. Consequently, for every $x \in \operatorname{int} (\dom \hat{\psi})$, we obtain an equality in \eqref{eq_du_op_b_m_psi_v_a_02.qq}, i.e.,
\begin{equation} \label{eq_du_op_b_m_psi_v_a_02.qq_i}
\varphi^{\hat{\psi}}(x) 
= \sup_{y \in \Rd} \Big(\langle x,y \rangle - (\hat{v} \star q)(y) \Big).
\end{equation}
Recall that the finite-valued function $\hat{v} \star q$ is strictly convex and differentiable with gradient \eqref{qq_smr_qq_i_bwi_grad}. The first order condition for an optimizer $\hat{y}(x) \in \Rd$ of the right-hand side of \eqref{eq_du_op_b_m_psi_v_a_02.qq_i} reads
\begin{equation} \label{eq_du_op_b_m_psi_v_a_03.qq}
(\nabla \hat{v} \star q)\big(\hat{y}(x)\big) = x.
\end{equation}
From \eqref{qq_smr_qq_i_bwi} we deduce that the right-hand side of \eqref{eq_du_op_b_m_psi_v_a_02.qq_i} admits a unique maximizer $\hat{y}(x)$ given by
\[
\hat{y}(x) 
= (\nabla \hat{v} \star q)^{-1}(x)
= \nabla(\hat{v} \star q)^{\ast}(x),
\]
for all $x \in \operatorname{int} (\dom \hat{\psi})$. 

\smallskip

For each $x \in \operatorname{int} (\dom \hat{\psi})$, we define a probability measure 
\[
\pi_{x}^{\hat{v}} \coloneqq \nabla \hat{v}\big(\hat{y}(x) + \, \cdot \,\big)(q).
\]
Since $\nu^{\hat{v}}$ has finite second moment, we observe that
\begin{equation} \label{eq_du_op_b_m_psi_v_a_00.qq}
\int \vert \nabla \hat{v}(y + z) \vert^{2} \, dq(z) < + \infty,
\end{equation}
for $\hat{\alpha}^{\hat{v}}$-a.e.\ $y \in \Rd$. By \eqref{eq_du_op_b_m_psi_v_a_03.qq} and \eqref{eq_du_op_b_m_psi_v_a_00.qq}, $\pi_{x}^{\hat{v}}$ is an element of $\PP_{2}^{x}(\Rd)$, for $\mu$-a.e.\ $x \in \Rd$. Hence we can apply part \ref{lem_phi_psi_x_gen_opt_qq_i} of Lemma \ref{lem_phi_psi_x_gen_opt_qq}, showing that the infimum
\[
\varphi^{\hat{\psi}}(x) 
= \inf_{p \in \PP_{2}^{x}(\Rd)} \Big( \int \hat{\psi} \, dp - \MCov(p,q)\Big)
\]
is attained by $\pi_{x}^{\hat{v}}$, for $\mu$-a.e.\ $x \in \Rd$. Therefore, from the representation \eqref{eq_rep_dual_func_qq} of the dual function \eqref{eq_def_dual_func_qq} and the fact that $\pi_{x}^{\hat{v}}(dy) \, \mu(dx) \in \MT(\mu,\nu^{\hat{v}})$, we obtain
\begin{equation} \label{eq_du_op_b_m_psi_v_a_04.qq}
\tilde{\mathcal{E}}(\hat{\psi}) = \int  \Big( \int \hat{\psi}(y) \, \pi^{\hat{v}}_{x}(dy)  - \varphi^{\hat{\psi}}(x) \Big) \, \mu(dx)
= \int \MCov(\pi_{x}^{\hat{v}},q) \, \mu(dx).
\end{equation}
On the other hand, from \eqref{eq_primal_qq}, Proposition \ref{prop_no_duality_gap_first_qq} and \eqref{lem_ext_dual_func_01.qq} we have
\begin{equation} \label{eq_du_op_b_m_psi_v_a_04.qq_i}
\int \MCov(\pi_{x}^{\hat{v}},q) \, \mu(dx) 
\leqslant P^{q}(\mu,\nu^{\hat{v}})
= \tilde{D}^{q}(\mu,\nu^{\hat{v}}) 
\leqslant \tilde{\mathcal{E}}(\hat{\psi}),
\end{equation}
which in light of \eqref{eq_du_op_b_m_psi_v_a_04.qq} implies that
\[
P^{q}(\mu,\nu^{\hat{v}}) = \int \MCov(\pi_{x}^{\hat{v}},q) \, \mu(dx).
\]
By uniqueness of the optimizer $\hat{\pi} \in \MT(\mu,\nu^{\hat{v}})$, we conclude that $\pi_{x}^{\hat{v}} = \hat{\pi}_{x}$, for $\mu$-a.e.\ $x \in \Rd$.

\medskip

\noindent \ref{qq_smr_qq_iii} According to \eqref{eq_du_op_b_m_psi_v_a_02.qq_i}, for every $x \in \operatorname{int} (\dom \hat{\psi})$, we have that
\[
\varphi^{\hat{\psi}}(x) 
= (\hat{v} \star q)^{\ast}(x).
\]
Recalling the definition of the dual function \eqref{theorem_new_duality_sec_eq_s_rel_for.qq}, we observe that
\begin{align*}
\mathcal{E}(\hat{\psi})
&= \int \Big( \int \hat{\psi}(y) \, \pi_{x}(dy) 
- (\hat{\psi}^{\ast} \star q)^{\ast}(x) \Big) \, \mu(dx) \\
&= \int  \Big( \int \hat{\psi}(y) \, \pi^{\hat{v}}_{x}(dy)  - \varphi^{\hat{\psi}}(x) \Big) \, \mu(dx)
= \tilde{\mathcal{E}}(\hat{\psi}).
\end{align*}
Finally, from \eqref{eq_du_op_b_m_psi_v_a_04.qq} and \eqref{eq_du_op_b_m_psi_v_a_04.qq_i} we deduce that $\hat{\psi}$ is a dual optimizer in the sense of Definition \ref{def_dual_opt_qq}. 
\end{proof}

\bibliographystyle{alpha}
{\footnotesize
\bibliography{references}}

\end{document}